\DeclareMathOperator{\Hom}{Hom}
\newtheorem{proposition}{Proposition}
\newtheorem{lemma}{Lemma}
\newtheorem{definition}{Definition}
\newtheorem{theorem}{Theorem}
\newtheorem*{fact}{Fact}
\newtheorem{remark}{Remark}
\newcommand{\Z}{\mathbb{Z}}
\newcommand{\Q}{\mathbb{Q}}
\newcommand{\rank}{\mathrm{rank}}
\author{Vlad Roman and Robert M. Guralnick}
 \thanks{Both authors were partially supported by NSF grant DMS $1901595$.  
We thank  J.-P. Serre, Janos Kollar and Michael Larsen for helpful conversations regarding the last section. }
\title{The variety of nilpotent pairs $(A,B)$ with $[A,B] = \lambda I$ }
\date{\today}
\begin{document}

\begin{abstract}
    Let $k$ be an algebraically closed field of characteristic $p >0$. We consider the variety of nilpotent pairs $(A,B)$ with $[A,B]=\lambda I$, namely the set of pairs $ X = \{ (A,B) \in M_n(k) \times M_n(k) \mid A,B \text{ nilpotent}, [A,B]=AB-BA = \lambda I, \lambda \in k \}$. We prove that if $n=pr$, then $X$ is irreducible of dimension $n^2$.
\end{abstract}

\maketitle

\section{Introduction}
Let $k$ be an algebraically closed field and $\mathfrak{g}$ a Lie algebra over $k$. We call
$$\mathcal{C}_2(\mathfrak{g})=\{ (x,y) \in \mathfrak{g} \times \mathfrak{g} \mid [x,y]=0\}$$
the commuting variety of the Lie algebra $\mathfrak{g}$. The study of such varieties originates in a paper of Motzkin and Taussky \cite{Motzkin-Taussky}, who prove that $\mathcal{C}_{2}(\mathfrak{gl_n})$ is is an irreducible variety of dimension $n^2+n$ (for $k$ of any characteristic). This was later rephrased by Guralnick \cite{Guralnick} in terms of the Zariski topology and using denseness arguments. This paper also shows that an old result of Gerstenhaber \cite{Gerstenhaber} follows as an easy consequence of the irreducibility of $\mathcal{C}_{2}(\mathfrak{gl_n})$, namely a generalization of the Cayley-Hamilton theorem which asserts that any two generated commutative subalgebra of $M_n(k)$ has dimension at most $n$.

The problem was also studied by Richardson \cite{Richardson}, who proved that in characteristic $0$ and with the assumption that $\mathfrak{g}$ is reductive, $\mathcal{C}_2(\mathfrak{g})$ is irreducible and moreover, one has $\dim(\mathcal{C}_2(\mathfrak{g})) = \dim(\mathfrak{g}) + \rank(\mathfrak{g})$, where $\rank(\mathfrak{g})$ is the dimension of a maximal toral subalgebra. A paper of Levy \cite{Levy} extends Richardson's theorem to a field $k$ of good characteristic, that is $char(k) \neq 2$ for corresponding Lie groups of type $B, C, D$, $char(k) \neq 2,3$ for $G_2, F_4, E_6, E_7$ and $char(k) \neq 2, 3, 5$ for $E_8$.

A classical variation on the problem is considering commuting nilpotent pairs, namely the variety $\mathcal{C}_2^{nil}(\mathfrak{g}) = \{ (x,y) \in \mathfrak{g} \times \mathfrak{g} \mid [x,y]=0, x,y \text{ nilpotent}\}$. Due to papers by Kirillov and Neretin \cite{Kirillov-Neretin} and Šivic \cite{Sivic1},\cite{Sivic2},\cite{Sivic3}, it is known that for $n \leq 3$, $\mathcal{C}_r^{nil}(\mathfrak{gl_n})$ is irreducible for any $r$. This was generalized by Premet \cite{Premet}, who proved that for $\mathfrak{g}$ semisimple, $\mathcal{C}_2^{nil}(\mathfrak{g})$ is an equidimensional variety and its irreducible components are parametrized by the distinguished nilpotent orbits in $\mathfrak{g}$ (i.e. the orbits of nilpotent elements which are not centralized by a nontrivial torus in the algebra group).  In particular, $\mathcal{C}_2^{nil}(\mathfrak{gl_n})$ is irreducible for any $n$ in any characteristic. The $\mathfrak{gl}_n$ result is equivalent to the  irreducibility of a Hilbert scheme associated to $\mathbb{P}^1$ with three points removed.  See \cite{Baranovsky, Premet}.   For general $r$, the variety of commuting $p$-nilpotent $r$-tuples has connections to the representation theory of the $r$th Frobenius kernel of the Lie algebra.   See \cite{FriedlanderPevtsova, FriedlanderPevtsovaSuslin} for more details about this.  

One can also study the variety of commuting triples $\mathcal{C}_3(\mathfrak{g})=\{ (x_1,x_2,x_3) \in \mathfrak{g}^3 \mid [x_i,x_j]=0\}$. Due to Guralnick's paper \cite{Guralnick} and an improvement by Holbrook and Omladič \cite{Holbrook}, it is known that $\mathcal{C}_3(\mathfrak{gl_n})$ is reducible for $n \geq 29$.  See \cite[Section 7.9]{OCV}. The analogue of the Cayley-Hamilton theorem for $3$-generated algebras is still open (but does hold for $n \le 11$ by irreducibility results).
Gerstenhaber \cite{Gerstenhaber} proved the reducibility of $\mathcal{C}_r(\mathfrak{gl_n}), r, n \ge 4$ by observing
that there are $4$-generated commutative subalgebras of $M_n(k)$ that have dimension greater than $n$.

There has been little study for the case of bad characteristic and in type A, of not very good characteristic (i.e. the case of the characteristic 
dividing $n$ for $\mathfrak{gl_n}$).   The first author \cite{Ro1}  considered the case of $\mathfrak{gl_n}(k)$ with $p$ dividing $n$, showing that there are two irreducible components
(and giving the dimension of each component).   The case of bad characteristic will be studied in \cite{Ro3}.  

 In this paper we consider nilpotent pairs whose commutator is a multiple of the identity, that is the variety 
$$X = \{ (A,B) \in M_n(k) \times M_n(k) \mid A,B \text{ nilpotent}, [A,B]=\lambda I, \lambda \in k \},$$
where $k$ is an algebraically closed field of characteristic $p>0$ with $p$ dividing $n$.   If we fix $\lambda$, 
set $X_{\lambda} :=\{ (A,B) \in M_n(k) \times M_n(k) \mid A,B \text{ nilpotent}, [A,B]=\lambda I, \lambda \in k \}$.  Note
that these are all isomorphic for $\lambda \ne 0$ and $X_1$ is precisely the set of pairs $\phi(x), \phi(y)$ of the first Weyl
algebra where $\phi$ is an algebra homomorphism  with $\phi(x)$ and $\phi(y)$ nilpotent. This will be discussed in more detail in Section \ref{prelim}.

We prove the following.

\begin{theorem}
 $X$ is an irreducible variety of dimension $n^2$.
\end{theorem}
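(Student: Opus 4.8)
The plan is to stratify $X$ by the value of the scalar $\lambda$ and to treat the loci $\lambda=0$ and $\lambda\neq 0$ separately, gluing them by a dimension count. Note first that $X$ is closed in $M_n(k)\times M_n(k)$, and that $\lambda$ (recovered from any diagonal entry of $[A,B]$) is a regular function $\lambda\colon X\to k$. Set $X_0=\lambda^{-1}(0)$, which is exactly the nilpotent commuting variety $\mathcal{C}_2^{nil}(\mathfrak{gl}_n)$, and $X_{\neq 0}=\lambda^{-1}(k^{\times})$, which is open in $X$. The map $((A,B),t)\mapsto(A,tB)$ is an isomorphism $X_1\times k^{\times}\xrightarrow{\sim}X_{\neq 0}$, where $X_1=\lambda^{-1}(1)=\{(A,B):A,B\text{ nilpotent},\ [A,B]=I\}$; hence $\dim X_{\neq 0}=\dim X_1+1$, and $X_{\neq 0}$ is irreducible if and only if $X_1$ is. Moreover $X_1\neq\varnothing$: on $k[x]/(x^n)$ the operators ``multiplication by $x$'' and ``$d/dx$'' are nilpotent and have commutator the identity, precisely because $p\mid n$.

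The heart of the matter is that $X_1$ is irreducible of dimension $n^2-1$. A point of $X_1$ is the same as a module structure on $k^n$ over the first Weyl algebra $A_1=k\langle x,y\rangle/(yx-xy-1)$ with $x,y$ acting nilpotently. In characteristic $p$, the elements $x^p,y^p$ are central and generate a polynomial subring $Z\cong k[s,t]$, over which $A_1$ is Azumaya of degree $p$; nilpotency of $x,y$ forces such a module to be supported at the origin of $\operatorname{Spec}Z$, hence to factor through $A_1/(x^p,y^p)^N$ for $N$ large (say $N=2r$). This finite quotient is Azumaya over the Artinian local ring $R=k[s,t]/(s,t)^N$, and its Brauer class vanishes there (Henselian local ring with algebraically closed residue field), so $A_1/(x^p,y^p)^N\cong M_p(R)$. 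Morita equivalence then identifies its $n$-dimensional modules with $r$-dimensional $R$-modules, i.e.\ with points of $\mathcal{C}_2^{nil}(\mathfrak{gl}_r)$, and concretely produces a surjective morphism $\Psi\colon GL_n\times\mathcal{C}_2^{nil}(\mathfrak{gl}_r)\to X_1$ sending $(g,(U,V))$ to the $g$-conjugate of the Weyl pair transported from the $R$-module determined by $(U,V)$. Since $\mathcal{C}_2^{nil}(\mathfrak{gl}_r)$ is irreducible (Premet) of dimension $r^2-1$, the source is irreducible, and therefore so is $X_1$. For the dimension: over a generic point, so $U$ regular nilpotent and $V\in k[U]$, Morita gives $\operatorname{End}_{A_1}(k^n)\cong\operatorname{End}_R(W)\cong k[U]$, of dimension $r$; hence the generic fibre of $\Psi$ has dimension $(r^2-r)+r=r^2$, and $\dim X_1=(n^2+r^2-1)-r^2=n^2-1$. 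Consequently $\overline{X_{\neq 0}}$ is an irreducible component of $X$ of dimension $n^2$.

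It remains to show $X_0\subseteq\overline{X_{\neq 0}}$, which then forces $X=\overline{X_{\neq 0}}$. On the irreducible affine variety $\overline{X_{\neq 0}}$, the function $\lambda$ is regular and not identically zero, and its zero locus is nonempty (it contains $(A,0)=\lim_{t\to 0}(A,tB_0)$ for any $(A,B_0)\in X_1$); hence by Krull's principal ideal theorem $\overline{X_{\neq 0}}\cap\{\lambda=0\}$ is pure of dimension $n^2-1$. But this set is contained in $X_0=\mathcal{C}_2^{nil}(\mathfrak{gl}_n)$, which is irreducible of dimension $n^2-1$ (again by Premet), so the full-dimensional closed subset $\overline{X_{\neq 0}}\cap\{\lambda=0\}$ is all of $X_0$. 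Thus $X=X_0\cup X_{\neq 0}\subseteq\overline{X_{\neq 0}}$, and $X$ is irreducible of dimension $n^2$. The main obstacle is the middle step: passing from the categorical Morita equivalence to the honest morphism $\Psi$ of varieties — verifying the splitting $A_1/(x^p,y^p)^N\cong M_p(R)$, checking that the transported Weyl pair depends algebraically on the $R$-module, and establishing surjectivity of $\Psi$ together with the $\operatorname{End}$-computation that pins down its generic fibre. Everything else is bookkeeping, plus the appeals to Premet's irreducibility theorem for $\mathcal{C}_2^{nil}(\mathfrak{gl}_r)$ and $\mathcal{C}_2^{nil}(\mathfrak{gl}_n)$.
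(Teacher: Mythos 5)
Your argument is correct, and it reaches the theorem by a route that is organized quite differently from the paper's, even though it rests on the same two pillars: the structure of finite-dimensional nilpotent quotients of the Weyl algebra (your Azumaya/Brauer-group splitting of $W/(x^p,y^p)^N$ as $M_p(R)$ is exactly the content of the paper's Proposition~3, proved there by a projective-cover argument rather than by quoting triviality of the Brauer group of a Henselian local ring) and Premet's irreducibility theorem. Where the paper studies the map $\phi(A,B)=(A^p,B^p)$ on the locus $Y=\{\lambda\neq 0\}$, identifies $\phi(Y)$ with the $r\times r$ nilpotent commuting variety, introduces the stratification by the conjugacy class of $A$ (the varieties $\mathcal{V}_E$), the notion of good pairs, and the fibre analysis needed to show that $\mathcal{V}_{reg}$ swallows all of $Y$, you instead parametrize $X_1=\{[A,B]=I\}$ from above by a single surjective morphism $\Psi\colon GL_n\times\mathcal{C}_2^{nil}(\mathfrak{gl}_r)\to X_1$, which yields irreducibility and the dimension $n^2-1$ of $X_1$ in one stroke and dispenses with the good/regular stratification entirely; and where the paper absorbs the $\lambda=0$ locus by the explicit degeneration $(N,bB+f(N))$ onto the dense regular locus of $\mathcal{D}$, you absorb it by Krull's principal ideal theorem plus the fact that $\mathcal{D}$ is irreducible of dimension $n^2-1$. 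Your final dimension count ($X_{\neq 0}\cong X_1\times k^\times$) also differs from the paper's (projection of $X$ onto the first, generically regular nilpotent, coordinate). The trade-off: your $\Psi$ is cleaner and more self-contained once one accepts that the Morita transport is an actual morphism of varieties and that $\Psi$ is surjective (you rightly flag these as the points needing care, and your sketch of them is sound), while the paper's route produces finer intermediate information (the identification of $\phi(Y)$, the fibres over good pairs) at the cost of more bookkeeping.

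One small point worth tightening: you compute the fibre of $\Psi$ only over images of points with $U$ regular nilpotent and call it the generic fibre. To apply the fibre-dimension theorem in the form you use, either observe that these points do lie over a dense subset of $X_1$, or, more simply, note that your own computation gives every fibre dimension as $\bigl(r^2-\dim Z_{M_r}(U,V)\bigr)+\dim Z_{M_r}(U,V)=r^2$, independent of the point, since the Morita correspondence identifies the $GL_n$-centralizer of the transported pair with the joint centralizer of $(U,V)$ in $M_r(k)$; with constant fibre dimension the equality $\dim X_1=n^2+r^2-1-r^2=n^2-1$ is immediate.
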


Of course, this gives the corresponding statement for the commuting variety of $\mathfrak{pgl_n}(k)$.  

The two key ingredients in the proof are the representation theory of the first Weyl algebra in characteristic $p$ and Premet's result \cite{Premet} on
the irreducibility of the commuting variety of nilpotent pairs in $\mathfrak{gl_n}(k)$.  This was originally proved in sufficiently large characteristic
by Baranovsky \cite{Baranovsky} using the irreducibility of a certain Hilbert scheme. In the last section we give a proof showing that many results about the components of the commuting variety  can be obtained in good characteristic from the corresponding result in characteristic $0$.

We use some basic results from algebraic geometry and the Zariski topology which for example can be found in \cite{Hartshorne1977}. See the beginning of the next section.

\section{Preliminary Results}\label{prelim}

We first record the following elementary and well known facts.

\begin{fact}[Fibre dimension theorem]
    Let $X,Y$ be varieties over the algebraically closed field $k$ with $X$ irreducible.  Let  $f: X \rightarrow Y$ be a dominant morphism. 
    If $y \in f(X)$, then  every component of $f^{-1}(y)$ has dimension at least $\dim X - \dim Y$.  
    \end{fact}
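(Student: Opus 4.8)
The plan is to prove the statement by induction on $d := \dim Y$, using essentially one input from commutative algebra: Krull's Hauptidealsatz in geometric form, namely that on an irreducible variety the zero locus of a single nonzero regular function has every component of codimension at most one. Note first that the hypotheses already force $Y$ to be irreducible, since $X$ is irreducible and $f(X)$ is dense, so $Y = \overline{f(X)}$ is irreducible; this makes the induction on $\dim Y$ meaningful. The base case $d = 0$ is immediate: then $Y = \{y\}$, so $f^{-1}(y) = X$ and the bound reads $\dim X \ge \dim X - 0$.

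For the inductive step, suppose $d \ge 1$ and the statement holds for all dominant morphisms with irreducible source and a target of dimension $< d$. First I would reduce to the case that $Y$ is affine, by replacing $Y$ with an affine open neighbourhood $V$ of $y$ and $X$ with $f^{-1}(V)$: this changes neither $\dim X$ nor $\dim Y$ nor the irreducibility of $X$, $f$ stays dominant, and crucially $f^{-1}(y)$ and its set of components are untouched. Now, since $\dim Y \ge 1$, the maximal ideal $\mathfrak m_y \subseteq \mathcal O(Y)$ is nonzero, so choose $g \in \mathcal O(Y)$ with $g(y) = 0$ and $g \ne 0$, and set $Y' = V(g)$ and $X' = f^{-1}(Y') = V(g \circ f)$. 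Dominance of $f$ together with $g \ne 0$ forces $g \circ f \ne 0$, so $X' \subsetneq X$; applying the Hauptidealsatz on the members of an affine open cover of $X$ and globalizing, every component of $X'$ has dimension $\ge \dim X - 1$, while $Y' \subsetneq Y$ with $Y$ irreducible gives $\dim Y' \le d - 1$. Now let $Z$ be any component of $f^{-1}(y)$. Since $Z \subseteq f^{-1}(y) \subseteq X'$, it lies in some component $X'_i$ of $X'$, so $\dim X'_i \ge \dim X - 1$. Put $Y'' := \overline{f(X'_i)} \subseteq Y'$, an irreducible variety with $\dim Y'' \le d - 1$ that contains $y$ (because $\emptyset \ne Z \subseteq X'_i$ and $f(Z) = \{y\}$). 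The restriction $f_i \colon X'_i \to Y''$ is a dominant morphism of varieties with $X'_i$ irreducible, and $Z$ — being a maximal irreducible closed subset of $f^{-1}(y)$ — is still a component of $f_i^{-1}(y) = f^{-1}(y) \cap X'_i$. The inductive hypothesis applies since $\dim Y'' < d$, and yields $\dim Z \ge \dim X'_i - \dim Y'' \ge (\dim X - 1) - (d - 1) = \dim X - d$, which is the desired bound.

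The only real content is the appeal to the Hauptidealsatz to bound the codimension of $X' = V(g \circ f)$ in $X$ by one; everything else is elementary topology of Noetherian irreducible spaces — passing to affine opens, the fact that a proper closed subset of an irreducible variety has strictly smaller dimension, and the bookkeeping that a component of $f^{-1}(y)$ contained in $X'_i$ remains a component of the smaller fibre $f^{-1}(y) \cap X'_i$. One mild point to watch is that $X$ is assumed only to be a variety, not affine, so the Hauptidealsatz must be invoked on an affine cover and the codimension estimate then globalized; this is harmless precisely because the conclusion we need is about codimension rather than dimension.
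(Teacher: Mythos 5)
Your argument is correct: it is the standard proof of the fibre dimension theorem by induction on $\dim Y$, cutting the target by a hypersurface through $y$ and invoking Krull's Hauptidealsatz on an affine cover, and the reductions (passing to an affine neighbourhood of $y$, noting $Y=\overline{f(X)}$ is irreducible, and checking that a component of $f^{-1}(y)$ remains a component of $f^{-1}(y)\cap X_i'$) are all handled properly. The paper records this statement as a well-known fact without proof, so there is nothing to compare against; your write-up would serve as a complete justification of it.
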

    
\begin{fact} Let $X$ be a closed subariety of projective space.  If $f:X \rightarrow Y$ is a morphism, then $f(X)$ is a closed subvariety of $Y$.
 \end{fact}
 

We next make a  few observation about the first Weyl algebra, which will be useful later. 
Let $k$ be an algebraically closed field of characteristic $p>0$ and let $W=k[x,y]/(xy-yx-1)$ be the first Weyl algebra. We show some facts about this.

By \cite{Revoy} we have the following.
\begin{proposition}
    The centre $Z(W) = k[x^p,y^p]$.
\end{proposition}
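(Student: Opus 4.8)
The plan is to prove the two inclusions separately: $k[x^p,y^p]\subseteq Z(W)$ is a one-line computation, and $Z(W)\subseteq k[x^p,y^p]$ is a coefficient-comparison argument using the standard PBW monomial basis $\{x^iy^j : i,j\geq 0\}$ of $W$.

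First I would record the commutator identities $[x^n,y]=nx^{n-1}$ and $[x,y^n]=ny^{n-1}$ in $W$, each proved by an immediate induction on $n$ from $xy-yx=1$ together with the Leibniz rule $[ab,c]=a[b,c]+[a,c]b$. Taking $n=p$ and using $\mathrm{char}(k)=p$ gives $[x^p,y]=0$ and $[x^p\text{ vs }x]$ trivially, so $x^p$ commutes with both generators $x,y$; since $x,y$ generate $W$ as a $k$-algebra, $x^p\in Z(W)$, and symmetrically $y^p\in Z(W)$. Because the monomials $(x^p)^a(y^p)^b=x^{pa}y^{pb}$ are distinct elements of the PBW basis they are $k$-linearly independent, so $x^p,y^p$ are algebraically independent and $k[x^p,y^p]$ is a genuine polynomial subalgebra sitting inside $Z(W)$.

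For the reverse inclusion, take $z\in Z(W)$ and write $z=\sum_{i,j}c_{ij}x^iy^j$ with finitely many $c_{ij}\in k$ nonzero. Using the identities above one gets $[x^iy^j,y]=ix^{i-1}y^j$ and $[x^iy^j,x]=-jx^iy^{j-1}$, both already in PBW form, so $0=[z,y]=\sum_{i,j}i\,c_{ij}x^{i-1}y^j$ and $0=[z,x]=-\sum_{i,j}j\,c_{ij}x^iy^{j-1}$. Comparing coefficients of the basis element $x^my^n$ in the first relation gives $(m+1)c_{m+1,n}=0$ for all $m\geq 0$, hence $c_{\ell,n}=0$ whenever $p\nmid\ell$; the second relation gives $c_{i,\ell}=0$ whenever $p\nmid\ell$. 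Thus the only surviving terms have $p\mid i$ and $p\mid j$, so $z=\sum_{a,b}c_{pa,pb}(x^p)^a(y^p)^b\in k[x^p,y^p]$, which finishes the proof.

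There is no real obstacle here; the only points needing care are getting the commutator formulas and the induced coefficient conditions exactly right — in particular, noting that $[x^iy^j,y]$ and $[x^iy^j,x]$ shift only one exponent and stay in the PBW basis, so the comparison of coefficients is clean — and remembering that $x,y$ generating $W$ is what upgrades "commutes with $x$ and $y$" to "central."
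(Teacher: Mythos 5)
Your proof is correct. Note that the paper does not actually prove this proposition at all: it simply cites Revoy for the statement $Z(W)=k[x^p,y^p]$. What you have written is the standard direct argument: the inclusion $k[x^p,y^p]\subseteq Z(W)$ via $[x^n,y]=nx^{n-1}$, $[x,y^n]=ny^{n-1}$ specialized at $n=p$, and the reverse inclusion by expanding a central element in the PBW basis $\{x^iy^j\}$ and comparing coefficients in $[z,y]=0$ and $[z,x]=0$, which kills every $c_{ij}$ with $p\nmid i$ or $p\nmid j$. Both halves are computed correctly, including the formulas $[x^iy^j,y]=ix^{i-1}y^j$ and $[x^iy^j,x]=-jx^iy^{j-1}$ and the resulting divisibility conditions. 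The only ingredient you invoke without proof is that $\{x^iy^j\}$ is a $k$-basis of $W$ in characteristic $p$; this is standard (e.g.\ by the diamond lemma, since the single rewriting rule $yx\mapsto xy-1$ has no ambiguities) and is in any case implicitly assumed by the paper itself when it asserts in the next proposition that $\{x^iy^j \mid 0\le i,j\le p-1\}$ is a $Z(W)$-basis of $W$. So your argument is a self-contained, more elementary replacement for the citation, and your observation that $x^p,y^p$ are algebraically independent even gives the slightly stronger statement that $Z(W)$ is a polynomial ring in two variables, which is consistent with how $Z(W)$ is used later in the paper.
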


\begin{proposition}
    $W$ is a free $Z(W)$-module of rank $p^2$.
\end{proposition}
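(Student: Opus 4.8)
The plan is to exhibit an explicit $Z(W)$-basis of $W$, namely the $p^2$ monomials $\{\, x^i y^j : 0 \le i, j \le p-1 \,\}$, and to deduce freeness of rank $p^2$ directly from it. Everything rests on two inputs: the PBW basis of $W$, and the centrality of $x^p$ and $y^p$ (the previous Proposition, which identifies $Z(W) = k[x^p,y^p]$).

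First I would record that $W$ has $k$-basis $\{\, x^i y^j : i,j \ge 0 \,\}$. Spanning is immediate: the relation $xy - yx = 1$ lets one push every $y$ to the right of every $x$ in any word, at the cost of lower-degree terms, so by induction on word length the monomials $x^i y^j$ span $W$ over $k$. Linear independence is the standard fact about the Weyl algebra (e.g. let $W$ act faithfully on $k[t]$ by $y \mapsto t$, $x \mapsto d/dt$, or use the order filtration). Granting this, take any $i,j \ge 0$ and write $i = p i_1 + i_0$, $j = p j_1 + j_0$ with $0 \le i_0, j_0 \le p-1$. Since $x^p$ and $y^p$ are central,
$$x^i y^j \;=\; (x^p)^{i_1}\, x^{i_0}\, (y^p)^{j_1}\, y^{j_0} \;=\; (x^p)^{i_1}(y^p)^{j_1}\, x^{i_0} y^{j_0},$$
so the $p^2$ monomials with exponents in $\{0,\dots,p-1\}$ span $W$ as a $Z(W)$-module.

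For independence over $Z(W)$, suppose $\sum_{0 \le i_0,j_0 \le p-1} c_{i_0 j_0}\, x^{i_0} y^{j_0} = 0$ with each $c_{i_0 j_0} \in Z(W)$. Expanding $c_{i_0 j_0} = \sum_{i_1, j_1} \gamma^{(i_0 j_0)}_{i_1 j_1}\,(x^p)^{i_1}(y^p)^{j_1}$ with $\gamma^{(i_0 j_0)}_{i_1 j_1} \in k$, and moving the central factors through, the relation becomes $\sum \gamma^{(i_0 j_0)}_{i_1 j_1}\, x^{p i_1 + i_0} y^{p j_1 + j_0} = 0$; the monomials occurring here are pairwise distinct because, given $0 \le i_0, j_0 < p$, the pair $(p i_1 + i_0,\, p j_1 + j_0)$ recovers $(i_1,j_1,i_0,j_0)$ uniquely. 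By the PBW basis every $\gamma$ vanishes, hence every $c_{i_0 j_0} = 0$. The proof is thus mostly bookkeeping; the one thing to get right is that division-with-remainder sets up a bijection $\Z_{\ge 0}^2 \cong \{0,\dots,p-1\}^2 \times \Z_{\ge 0}^2$, which simultaneously yields spanning and independence (and incidentally reconfirms that $x^p, y^p$ are algebraically independent, so $Z(W)$ is genuinely a polynomial ring in two variables).
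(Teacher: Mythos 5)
Your proof is correct and follows the same route as the paper, which simply asserts that $\{x^i y^j \mid 0 \le i,j \le p-1\}$ is a $Z(W)$-basis; you supply the details (PBW basis, centrality of $x^p,y^p$, and division with remainder on exponents) that the paper leaves implicit.
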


\begin{proof}
    Indeed, a $Z(W)$-basis of $W$ is given by $\{ x^i y^j \mid 0 \leq i,j \leq p-1\}$.
\end{proof}

The next result is critical to our results and shows that knowing about (nilpotent) representations of the center of the first Weyl algebra classifies the
(nilpotent) representations of the algebra.  We can then use Premet's result.

Now let $f:W \rightarrow R$ be a finite-dimensional homomorphic image of $W$ such that $f(x),f(y)$ are nilpotent in $R$, say $f(x)^{p^a}=f(y)^{p^a}=0$.

\begin{proposition}
    $R \cong M_p(Z(R))$.
\end{proposition}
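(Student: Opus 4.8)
The plan is to exploit the fact that $W$ is, locally over its centre, a $p\times p$ matrix algebra, and to transport this structure through the quotient map $f$. Since $x^{p},y^{p}\in Z(W)$, also $x^{p^a}=(x^{p})^{p^{a-1}}$ and $y^{p^a}=(y^{p})^{p^{a-1}}$ lie in $Z(W)$; we may assume $a\ge 1$ (if $f(x)=f(y)=0$ then $f(1)=[f(x),f(y)]=0$, so $R=0$ and there is nothing to prove). As $f(x)^{p^a}=f(y)^{p^a}=0$, the surjection $f$ factors through $\bar W:=W/(x^{p^a}W+y^{p^a}W)$. Put $\bar Z:=Z(W)/(x^{p^a},y^{p^a})$; because $x^{p^a},y^{p^a}$ are central, $x^{p^a}W+y^{p^a}W$ is exactly the two-sided ideal they generate, so $\bar W=W\otimes_{Z(W)}\bar Z$. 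Here $\bar Z=k[\,\overline{x^p},\overline{y^p}\,]$ is a commutative $k$-algebra in which $\overline{x^p}$ and $\overline{y^p}$ are nilpotent, hence $\bar Z$ is finite-dimensional, local Artinian, with residue field $k$ and nilpotent maximal ideal $\mathfrak m$; and since $W$ is free of rank $p^2$ over $Z(W)$, the base change $\bar W$ is free of rank $p^2$ over $\bar Z$. It therefore suffices to prove $\bar W\cong M_p(\bar Z)$: then $R$ is a quotient of $M_p(\bar Z)$ by a two-sided ideal, every two-sided ideal of $M_p(\bar Z)$ has the form $M_p(\mathfrak a)$ for an ideal $\mathfrak a$ of $\bar Z$, so $R\cong M_p(\bar Z/\mathfrak a)$ and $Z(R)=Z(M_p(\bar Z/\mathfrak a))=\bar Z/\mathfrak a$, giving $R\cong M_p(Z(R))$.

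The first real step is to identify the special fibre $\bar W/\mathfrak m\bar W$. Reducing further, this ring equals $W\otimes_{Z(W)}k=k\langle x,y\rangle/(xy-yx-1,\,x^{p},\,y^{p})$, which has $k$-dimension exactly $p^2$ by freeness of $\bar W$ over $\bar Z$. It acts on $V:=k[t]/(t^{p})$ by letting $x$ act as multiplication by $t$ and $y$ act as $-d/dt$: one checks directly that $[x,y]=1$ and $x^{p}=y^{p}=0$ on $V$, and that $V$ is a simple module (differentiate a nonzero element enough times to reach a nonzero scalar, then apply powers of $t$ to recover all of $V$). By the Jacobson density theorem the induced map from this $p^2$-dimensional algebra to $\mathrm{End}_k(V)=M_p(k)$ is surjective, hence bijective by dimension count, so $\bar W/\mathfrak m\bar W\cong M_p(k)$. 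Conceptually this is just the assertion that $W$ is Azumaya of degree $p$ over $Z(W)$, cf.\ \cite{Revoy}, split over the algebraically closed point $\mathfrak m$.

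Next I would lift a complete system of $p^2$ matrix units from $M_p(k)=\bar W/\mathfrak m\bar W$ to $\bar W$; this is legitimate since $\mathfrak m\bar W$ is a nilpotent two-sided ideal of $\bar W$, and idempotents — and then complete systems of matrix units — lift modulo nilpotent ideals. Writing $e_{11}$ for the lifted diagonal idempotent and $C:=e_{11}\bar We_{11}$, the Peirce decomposition of $\bar W$ with respect to these matrix units yields a $\bar Z$-algebra isomorphism $\bar W\cong M_p(C)$, so $\bar W\cong C^{\,p^2}$ as $\bar Z$-modules. Comparing with freeness of rank $p^2$ over the local ring $\bar Z$ forces $C$ to be free of rank $1$ over $\bar Z$; and a unital $\bar Z$-algebra that is free of rank one as a $\bar Z$-module is canonically isomorphic to $\bar Z$. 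Hence $C\cong\bar Z$ and $\bar W\cong M_p(\bar Z)$, completing the argument.

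The only step that uses input beyond the two Propositions already recorded is the identification of the special fibre with $M_p(k)$ — equivalently, the Azumaya property of $W$ — and I expect that to be the main (rather modest) obstacle; the remaining ingredients (lifting idempotents modulo a nilpotent ideal, the Peirce decomposition, and the structure of two-sided ideals in a matrix ring) are standard. One should be mildly careful that $\bar Z$ is genuinely local, so that $\bar Z$-module rank behaves as expected, and that the matrix units really can be lifted all the way into $\bar W$ and not merely modulo $\mathfrak m^{2}$.
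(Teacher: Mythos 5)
Your argument is correct, and its overall skeleton matches the paper's: both reduce to the ``universal'' finite quotient $\bar W=W/(x^{p^a},y^{p^a})W$ (the paper's $\tilde R$), prove that this algebra is $M_p$ over its centre $\bar Z=Z(W)/(x^{p^a},y^{p^a})$ using freeness of $W$ over $Z(W)$ of rank $p^2$, and then pass to $R$ via the fact that every two-sided ideal of $M_p(\bar Z)$ is $M_p(\mathfrak a)$. Where you genuinely diverge is in how the middle isomorphism $\bar W\cong M_p(\bar Z)$ is obtained. The paper argues representation-theoretically: it asserts that $\bar W$ has a unique simple module with $\bar W/J(\bar W)\cong M_p(k)$, decomposes $\bar W$ as $p$ copies of the projective cover $P$ (free of rank $p$ over $\bar Z$), embeds $\bar W\hookrightarrow\mathrm{End}_{\bar Z}(P)\cong M_p(\bar Z)$, and finishes by comparing dimensions. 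You instead compute the special fibre $\bar W/\mathfrak m\bar W\cong M_p(k)$ explicitly via the simple module $k[t]/(t^p)$ and Jacobson density, lift a full system of matrix units through the nilpotent ideal $\mathfrak m\bar W$, and use the Peirce decomposition plus a rank count over the local ring $\bar Z$ to identify the corner ring with $\bar Z$ --- in effect the standard ``Azumaya algebra split by lifting idempotents over a nilpotent base'' argument. Your route is longer but more self-contained: it actually verifies the $M_p(k)$ residue statement that the paper only asserts, and it avoids invoking projective covers and the freeness of $P$; the paper's route is shorter once one grants those representation-theoretic facts. Two small points to keep tidy: the rank-one step should note either Krull--Schmidt for finite-length modules or that $C=e_{11}\bar We_{11}$ is a $\bar Z$-direct summand of the free module $\bar W$, hence projective and thus free over the local ring $\bar Z$; and the claim that a unital $\bar Z$-algebra free of rank one equals $\bar Z$ deserves the one-line check that the unit is part of a basis (its coefficient is forced to be invertible). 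Both are routine and do not affect correctness.
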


\begin{proof}
    Let $I$ be the ideal of $Z(W)$ generated by $x^{p^a}$ and $y^{p^a}$ . Consider $$\Tilde{R}=W/IW,$$
    which is still a finite dimensional $k$-algebra. 
    Since $\Tilde{R}$ is a finite dimensional $k$-algebra, $\Tilde{R}$ is Artinian. 
    Since $W$ (with $x,y$ nilpotent) has a unique simple module, so does $\Tilde{R}$.
    Therefore, $\Tilde{R}$ is a local ring with unique maximal (nilpotent) ideal $J(\Tilde{R})$ and we have $\Tilde{R}/J(\Tilde{R}) \cong M_p(k)$. 
    Let $S$ be the unique simple module of $\Tilde{R}$.
    
    Let $P$ be the projective cover of $S$, so we have that $P/J(P) \cong S$ is simple. By using the decomposition of $\Tilde{R}$ (as a $k$-algebra) into projective indecomposable modules, we obtain 
    $$\Tilde{R} \cong \underbrace{P \oplus \cdots \oplus P}_\text{$p$ copies}$$
    and $P$ is projective indecomposable and in fact $P$ is free of rank $p$ since $\Tilde{R}$ is local.

    Next, since $W$ is free over $Z(W)$ of rank $p^2$, we obtain that $\Tilde{R}=W/IW$ is free over $Z(W)/I$ of rank $p^2$. We can embed $\Tilde{R}$ into the endomorphism ring $\mathrm{End}_{Z/I}(P)$, so we have
    $$\Tilde{R} \xhookrightarrow{} \mathrm{End}_{Z(W)/I}(P) \cong M_p(Z(W)/I).$$
    Note that $\dim_k \Tilde{R} = p^2 \dim_k Z(W)/I = p^2 e.$
    On the other hand, $\dim M_p(Z(W)/I) = p^2 \dim_k Z(W)/I$, so it follows that $\Tilde{R} \cong M_p(Z(W)/I)$.

    Finally, since $\Tilde{R} \twoheadrightarrow R$, $R \cong \Tilde{R}/LR$ for some ideal $L$ of $Z(W)$. Therefore, $$\Tilde{R} \cong M_p(Z(W)/L) \cong M_p(Z(\Tilde{R})).$$
\end{proof}

\section{The variety}

Let us first fix some notation. We have the following sets:
$$X = \{ (A,B) \in M_n(k) \times M_n(k) \mid A,B \text{ nilpotent}, [A,B]=\lambda I, \lambda \in k \},$$
$$Y= \{ (A,B) \in M_n(k) \times M_n(k) \mid A,B \text{ nilpotent}, [A,B]=\lambda I \text{ for
some } \lambda \ne 0\},$$
$$\mathcal{D} = \{ (A,B) \in M_n(k) \times M_n(k) \mid A,B \text{ nilpotent, } [A,B]=0 \}.$$
Note that $\mathcal{D}$ is the commuting variety of nilpotent pairs, which is irreducible of dimension $n^2-1$ by \cite{Premet} and $Y$ is the open subvariety of $X$ with $\lambda \neq 0$.

Recall that by the Morita equivalence, any ring $R$ is Morita equivalent to the matrix ring $M_n(R)$. This allows us to identify any $R$-module $V$ as column vectors of size $p$ with entries in $M$, where $M$ is a $Z(W)$-module. The identification $V \leftrightarrow M$ is an equivalence between the categories of $R$-modules and $M_n(R)$-modules. Using this fact we obtain the following:

\begin{proposition}\label{Morita}
    Two pairs $(A,B)$ and $(C,D)$  in $Y$ are conjugate  if and only if $(A^p,B^p)$ and $(C^p,D^p)$ are conjugate.
\end{proposition}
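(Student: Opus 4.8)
The forward implication is immediate: if $g\in GL_n(k)$ satisfies $gAg^{-1}=C$ and $gBg^{-1}=D$, then $gA^pg^{-1}=C^p$ and $gB^pg^{-1}=D^p$; note also that such a $g$ forces $[A,B]=[C,D]$, so throughout the converse we work with a common commutator scalar $\lambda\neq 0$ (and write $\sim$ for simultaneous conjugacy). The plan for the converse is to transport \emph{both} conjugacy statements into module language, using the structure theorem $R\cong M_p(Z(R))$ for nilpotent finite-dimensional images $R$ of $W$ together with the matrix-ring Morita equivalence.

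Given $(A,B)\in Y$ with $[A,B]=\lambda I$, make $k^n$ into a $W$-module $V_{A,B}$ by letting $x$ act as $\lambda^{-1}A$ and $y$ as $B$; since $A,B$ are nilpotent, $x,y$ act nilpotently, say $x^{p^a}=y^{p^a}=0$ on $V_{A,B}$, so $V_{A,B}$ factors through $\tilde R:=W/IW$ with $I=(x^{p^a},y^{p^a})\subseteq Z(W)$, and (by the construction in the proof of the cited structure theorem) $\tilde R\cong M_p(\Lambda_0)$ where $\Lambda_0=Z(W)/I$ is a finite-dimensional commutative local $k$-algebra. Via the Morita equivalence $M_p(\Lambda_0)\text{-mod}\simeq\Lambda_0\text{-mod}$, $V_{A,B}$ corresponds to a $\Lambda_0$-module $M_{A,B}$ with $\dim_k M_{A,B}=n/p$ and $V_{A,B}\cong M_{A,B}^{\oplus p}$ as $\Lambda_0$-modules. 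Choosing $a$ large enough to serve both pairs, $M_{A,B}$ and $M_{C,D}$ live over the same ring $\Lambda_0$, and since an equivalence of categories reflects isomorphisms (and a $W$-linear map between $\tilde R$-modules is the same as a $\tilde R$-linear one) one gets the key dictionary entry
\[
 V_{A,B}\cong V_{C,D}\ \text{as }W\text{-modules}\iff M_{A,B}\cong M_{C,D}\ \text{as }k[x^p,y^p]\text{-modules}.
\]

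Now compare the two sides. Because $[A,B]=[C,D]=\lambda I$, an isomorphism of $W$-modules $V_{A,B}\cong V_{C,D}$ is precisely a $g\in GL_n(k)$ with $g(\lambda^{-1}A)g^{-1}=\lambda^{-1}C$ and $gBg^{-1}=D$, i.e.\ $(A,B)\sim(C,D)$; so by the dictionary $(A,B)\sim(C,D)\iff M_{A,B}\cong M_{C,D}$ over $k[x^p,y^p]$. On the other hand, under $V_{A,B}\cong M_{A,B}^{\oplus p}$ the central operators $A^p=\lambda^p x^p$ and $B^p=y^p$ act blockwise as $p$ copies of the operators by which they act on $M_{A,B}$; hence $(A^p,B^p)\sim(C^p,D^p)$ says exactly that $M_{A,B}^{\oplus p}\cong M_{C,D}^{\oplus p}$ as $k[x^p,y^p]$-modules, where $\lambda^p$ is the same invertible rescaling of $x^p$ on both sides (this is where the common value of $\lambda$ enters). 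By Krull--Schmidt for finite-dimensional $k[x^p,y^p]$-modules this is in turn equivalent to $M_{A,B}\cong M_{C,D}$. Both conjugacy statements therefore translate to the same module isomorphism, which proves the equivalence.

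The step I expect to be the main obstacle is the displayed dictionary: one needs that an isomorphism of the associated \emph{central} modules lifts to an isomorphism of the \emph{$W$-modules}, not merely to one intertwining some abstract identification of the subalgebras of $M_n(k)$ generated by the two pairs. Routing everything through a common surjection $W\twoheadrightarrow\tilde R\cong M_p(\Lambda_0)$ and the honest matrix-ring Morita equivalence is what makes this clean; a hands-on argument with a single image $R$ would instead require the (correct, but extra) fact that two pairs generating the same $R$ with equal commutator scalar and equal $p$-th powers differ by an inner automorphism of $R$ -- a Skolem--Noether argument over the local ring $Z(R)$. The only other point to keep straight is the bookkeeping with $\lambda$ and the rescaling $x\mapsto\lambda^{-1}A$.
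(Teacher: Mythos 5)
Your proof follows essentially the same route as the paper's: the paper's entire argument is the one-line invocation of the Morita equivalence coming from $R\cong M_p(Z(R))$, and your write-up is exactly that equivalence spelled out in detail (passing to $\tilde R=W/IW\cong M_p(\Lambda_0)$, the column-vector dictionary between $\tilde R$-modules and $\Lambda_0$-modules, and the Krull--Schmidt step to get from $M_{A,B}^{\oplus p}\cong M_{C,D}^{\oplus p}$ back to $M_{A,B}\cong M_{C,D}$). All of those steps are correct.

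The one genuine issue is your handling of $\lambda$ in the converse. You assume a common commutator scalar for $(A,B)$ and $(C,D)$, justifying this by the remark that a conjugating $g$ forces $[A,B]=[C,D]$; but that remark belongs to the forward implication, whereas in the converse you are only given that $(A^p,B^p)$ and $(C^p,D^p)$ are conjugate, and this hypothesis does not determine $\lambda$. In fact, without the common-$\lambda$ assumption the converse is false as literally stated: on $V=k[t]/(t^p)$ let $A_\lambda=\lambda\,d/dt$ and let $B$ be multiplication by $t$; then $[A_\lambda,B]=\lambda I$ and $A_\lambda^p=B^p=0$ for every $\lambda\neq 0$, so all pairs $(A_\lambda,B)$ have identical (hence conjugate) $p$-th powers, yet $(A_\lambda,B)$ and $(A_\mu,B)$ are not conjugate for $\lambda\neq\mu$ because conjugation preserves the commutator. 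So what your argument actually establishes is the fixed-$\lambda$ statement: for pairs with the same commutator scalar, simultaneous conjugacy is equivalent to simultaneous conjugacy of the $p$-th powers. That is also all the paper's one-line proof gives (viewing both pairs as modules over the same quotient of $W$ presupposes the same rescaling $x\mapsto\lambda^{-1}A$), so your proof matches the intended content; you should simply state the restriction $[A,B]=[C,D]$ as an explicit hypothesis rather than deriving it from the nonexistent $g$, and be aware that the unrestricted statement needs this caveat.
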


\begin{proof}
    By the Morita equivalence, the representations are equivalent as $R$-modules if and only if they are as $Z(W)$-modules.
\end{proof}

\begin{remark}
    $\mathcal{D}$ is contained in $\overline{Y}$.
\end{remark}
To see this, take $N$ to be a regular (i.e. nonderogatory) nilpotent matrix with $[N,B] = I$ and notice that 
$$[N,bB+f(N)] = [N,bB] + [N,f(N)] = b[N,B] + 0 = bI,$$
so $(N,bB+f(N)) \in Y$. Thus,     $\overline{Y}$ (the Zariski closure of $Y$)  contains all pairs of the form $(N,f(N))$, so all regular commuting nilpotent pairs. By \cite{Premet}, this is open and dense in $\mathcal{D}$.

An easy consequence of the Morita correspondence is the following: 

\begin{lemma}\label{CD=DC}   If $(A,B) \in Y$, then 
  $(A,B)$  is conjugate to a pair of block diagonal matrices with all matrix blocks of size $r \times r$, where $n = pr$ and $(A^p,B^p) = ((C, ..., C), (D, ..., D))$ with $CD=DC$.
\end{lemma}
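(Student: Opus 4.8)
The plan is to combine the earlier structure result $R \cong M_p(Z(R))$ for finite-dimensional nilpotent quotients of the Weyl algebra with the Morita equivalence.

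First I would normalize the commutator. Given $(A,B) \in Y$ with $[A,B] = \lambda I$ and $\lambda \neq 0$, the pair $(A, \lambda^{-1}B)$ consists of nilpotent matrices with $[A, \lambda^{-1}B] = I$. If the lemma is known for this normalized pair, say some $g \in GL_n(k)$ conjugates $(A, \lambda^{-1}B)$ to a pair whose $p$-th powers are $(\mathrm{diag}(C,\ldots,C), \mathrm{diag}(E,\ldots,E))$ with $CE = EC$, then the same $g$ conjugates $(A,B)$ to a pair whose $p$-th powers are $(\mathrm{diag}(C,\ldots,C), \mathrm{diag}(\lambda^p E,\ldots,\lambda^p E))$, and $C$ still commutes with $\lambda^p E$. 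So it is enough to treat pairs with $[A,B] = I$.

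Assume $[A,B] = I$. Then $x \mapsto A$, $y \mapsto B$ makes $V = k^n$ a module over the first Weyl algebra $W$, and since $A,B$ are nilpotent the image $R$ of $W$ inside $\mathrm{End}_k(V) = M_n(k)$ is a finite-dimensional homomorphic image of $W$ on which $x,y$ act nilpotently; by the earlier proposition, $R \cong M_p(Z(R))$. Applying the Morita equivalence between $R$-modules and $Z(R)$-modules, $V$ corresponds to a $Z(R)$-module $M$ with $\dim_k V = p\dim_k M$, so $\dim_k M = n/p = r$; concretely $V \cong M^{\oplus p}$, with $R = M_p(Z(R))$ acting on column vectors by matrix multiplication. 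Choosing a $k$-basis of $M \cong k^r$ and the induced basis of $V \cong k^n$ amounts to conjugating $(A,B)$ by a suitable $g \in GL_n(k)$, and I claim the conjugated pair has the desired form. Indeed, $x^p$ and $y^p$ are central in $W$, so $A^p$ and $B^p$ land in $Z(R)$, and under $R \cong M_p(Z(R))$ — since $Z(M_p(Z(R))) = Z(R)\cdot I_p$ — they become scalar matrices $cI_p$ and $dI_p$ with $c,d \in Z(R)$. On $V \cong M^{\oplus p}$ the scalar $cI_p$ acts as $\mathrm{diag}(C,\ldots,C)$, where $C \in M_r(k)$ is the matrix of the action of $c$ on $M$, and likewise $B^p$ acts as $\mathrm{diag}(D,\ldots,D)$; and $cd = dc$ in the commutative ring $Z(R)$ forces $CD = DC$. (One notes in passing that $C$ and $D$ are nilpotent, being diagonal blocks of the nilpotent matrices $A^p$ and $B^p$.)

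The argument is short because the real content has already been packed into the proposition $R \cong M_p(Z(R))$ (which itself rests on the representation theory of $W$ recorded above). The one step I would take care with is the passage from ``central in $R$'' to ``scalar matrix over $Z(R)$'' under the Morita identification: this is exactly what forces the $p$ diagonal blocks of $A^p$ (respectively $B^p$) to be literally equal rather than merely conjugate, which is the substance of the formula $(A^p,B^p) = ((C,\ldots,C),(D,\ldots,D))$.
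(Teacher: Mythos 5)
Your proof is correct and follows essentially the same route as the paper: both rest on the proposition $R \cong M_p(Z(R))$ applied to the algebra generated by $A,B$ (a nilpotent finite-dimensional quotient of $W$), with $Z(R)=k[A^p,B^p]$ commutative and $A^p,B^p$ central, so that after conjugation $R$ becomes $r\times r$ block matrices and $A^p,B^p$ become constant block-diagonal. Your write-up merely makes explicit two points the paper leaves implicit --- the rescaling from $[A,B]=\lambda I$ to $[A,B]=I$ and the Morita identification $V\cong M^{\oplus p}$ --- which is a welcome clarification but not a different argument.
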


\begin{proof}
    Let $(A,B) \in Y$. Consider the module $P$ for any finite-dimensional homomorphic image $R$ of $W$ of dimension $n$. Then
$\langle A,B \rangle$ generate $M_p(Z)$, where $Z =k[A^p, B^p]$ and so we can conjugate $R$ to
equal $M_p(Z)$ corresponding to block $r \times r$ matrices with $Z$ corresponding to scalar matrices.   The result follows. 
\end{proof}

Now consider the map $\phi$ defined on $Y$  sending $(A,B) \mapsto (A^p,B^p)$.    

\begin{proposition}\label{image_irreducible}
    The image $\phi(Y) = \{ (A^p,B^p) | (A,B) \in Y\}$ is an irreducible variety.
\end{proposition}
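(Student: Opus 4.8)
The plan is to realise $\phi(Y)$ as the image, under a morphism of varieties, of an irreducible variety; since the image of an irreducible variety under a morphism is again irreducible, this finishes the proof. The irreducible variety I will use is $GL_n(k) \times \mathcal{N}$, where
$$\mathcal{N} = \{(C,D) \in M_r(k) \times M_r(k) \mid C,D \text{ nilpotent},\ [C,D]=0\}$$
is the commuting variety of nilpotent pairs in $M_r(k)$, with $n = pr$. This is a closed subvariety of $M_r(k) \times M_r(k)$, and it is irreducible by Premet's theorem \cite{Premet} applied to $\mathfrak{gl_r}$; together with the irreducibility of $GL_n(k)$ this makes $GL_n(k) \times \mathcal{N}$ irreducible. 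For $C \in M_r(k)$ write $C^{(p)} = \mathrm{diag}(C,\dots,C) \in M_n(k)$ for the block-diagonal matrix with $p$ equal blocks, and define
$$\Theta : GL_n(k) \times \mathcal{N} \longrightarrow M_n(k) \times M_n(k), \qquad \Theta\bigl(g,(C,D)\bigr) = \bigl(g\,C^{(p)}g^{-1},\ g\,D^{(p)}g^{-1}\bigr),$$
which is a morphism. It then suffices to prove the set equality $\phi(Y) = \Theta\bigl(GL_n(k) \times \mathcal{N}\bigr)$.

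The inclusion $\phi(Y) \subseteq \Theta\bigl(GL_n(k)\times\mathcal{N}\bigr)$ is essentially Lemma \ref{CD=DC}: given $(A^p,B^p)$ with $(A,B) \in Y$, there is $h \in GL_n(k)$ with $(hAh^{-1})^p = C^{(p)}$ and $(hBh^{-1})^p = D^{(p)}$ for some commuting $C,D \in M_r(k)$, and $C,D$ are nilpotent because $hAh^{-1},hBh^{-1}$ are, so $(C,D) \in \mathcal{N}$ and $(A^p,B^p) = \Theta\bigl(h^{-1},(C,D)\bigr)$.

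For the reverse inclusion, since $Y$ is stable under simultaneous conjugation and $\phi$ is equivariant for that action, it is enough, for each $(C,D) \in \mathcal{N}$, to produce a pair $(A_0,B_0) \in Y$ with $(A_0^p,B_0^p) = (C^{(p)},D^{(p)})$; here one uses the Weyl-algebra machinery of Section \ref{prelim}. Let $Z_0 = k[C,D] \subseteq M_r(k)$, a finite-dimensional local $k$-algebra (local since $C,D$ are nilpotent), and let $M = k^r$, a faithful $Z_0$-module. Choose $m$ with $C^m = D^m = 0$ and let $I$ be the ideal of $Z(W) = k[x^p,y^p]$ generated by $x^{pm}$ and $y^{pm}$; as shown in the proof of the Proposition that $R \cong M_p(Z(R))$, one has $W/IW \cong M_p\bigl(Z(W)/I\bigr)$. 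The surjection $Z(W)/I \twoheadrightarrow Z_0$, $x^p \mapsto C$, $y^p \mapsto D$, then induces a surjection $W \twoheadrightarrow M_p(Z_0) =: R$, exhibiting $R$ as a finite-dimensional homomorphic image of $W$ in which $x$ and $y$ act nilpotently (their $p$-th powers map to the block-scalar matrices $C^{(p)}, D^{(p)}$, which are nilpotent). Under the Morita equivalence for $R = M_p(Z_0)$, the $R$-module attached to $M$ is $V = M^{\oplus p}$, of $k$-dimension $pr = n$, on which $Z_0$ acts through block-scalar matrices. Letting $A_0,B_0 \in M_n(k)$ be the matrices by which $x,y$ act on $V$, we get $[A_0,B_0] = I_n$ (from $xy - yx = 1$), $A_0,B_0$ nilpotent, and $A_0^p = C^{(p)}$, $B_0^p = D^{(p)}$, since these are the actions of $x^p, y^p$. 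Thus $(A_0,B_0) \in Y$ and $\phi(A_0,B_0) = (C^{(p)},D^{(p)})$; conjugating by any $g \in GL_n(k)$ then shows $\Theta\bigl(g,(C,D)\bigr) \in \phi(Y)$.

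With the equality $\phi(Y) = \Theta\bigl(GL_n(k)\times\mathcal{N}\bigr)$ in hand, irreducibility follows formally: for nonempty open $U,U' \subseteq \phi(Y)$, the sets $\Theta^{-1}(U),\Theta^{-1}(U')$ are nonempty and open in the irreducible variety $GL_n(k)\times\mathcal{N}$, hence meet, so $U \cap U' \neq \emptyset$; in particular $\phi(Y)$ (constructible by Chevalley) is irreducible, i.e.\ its closure is an irreducible variety. I expect the one genuinely delicate point to be the realisation step above — turning an arbitrary commuting nilpotent pair of size $r$ into a pair in $Y$ with the prescribed block-scalar $p$-th powers — which is precisely where the representation theory of the first Weyl algebra (the identification of finite-dimensional images of $W$ with matrix algebras over their centres, and the Morita correspondence $V \leftrightarrow M$) does the real work; everything else is formal.
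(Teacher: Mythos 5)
Your proof is correct, and while it ends up in the same place as the paper --- identifying $\phi(Y)$, up to simultaneous conjugation, with the commuting nilpotent variety of $M_r(k)$ and then invoking Premet --- the mechanism at the key step is genuinely different. The paper only exhibits pairs $(C,D)$ with $C$ regular nilpotent directly in the image (using the existence of $(A,B)\in Y$ with $A$ regular nilpotent from \cite{mypaper}), and then combines closedness of $\phi(Y')$ (via homogeneity), stability under block-scalar conjugation, and Premet's density of the regular commuting nilpotent pairs to conclude that the image is the whole commuting nilpotent variety. You instead prove the sharper set-theoretic equality $\phi(Y)=\Theta\bigl(GL_n(k)\times\mathcal{N}\bigr)$ by lifting an \emph{arbitrary} commuting nilpotent pair $(C,D)\in M_r(k)$: the evaluation $Z(W)=k[x^p,y^p]\twoheadrightarrow k[C,D]$, combined with the isomorphism $W/IW\cong M_p(Z(W)/I)$ from Section \ref{prelim}, yields a surjection $W\twoheadrightarrow M_p(k[C,D])$, and the Morita module $M^{\oplus p}$ produces $(A_0,B_0)\in Y$ with $(A_0^p,B_0^p)=(C^{(p)},D^{(p)})$. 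This buys you a cleaner statement (an exact description of $\phi(Y)$, not just density in a closed image), and it avoids both the closedness-of-image argument and the regular-nilpotent existence result of \cite{mypaper}, using from \cite{Premet} only irreducibility of $\mathcal{N}$ rather than the density of the regular sheet; the price is the explicit Weyl-algebra lifting, which is however exactly the machinery the paper already develops. Two small points to make the lifting airtight: either take the central ideal generated by $x^{p^a},y^{p^a}$ with $p^{a-1}\ge m$ so you can quote the Proposition $\tilde{R}\cong M_p(Z(W)/I)$ verbatim (your choice $x^{pm},y^{pm}$ also works, but then say explicitly that the proof there is unchanged), and note that $x^p,y^p$ are algebraically independent (immediate from the PBW basis of $W$), so the evaluation $x^p\mapsto C$, $y^p\mapsto D$ is indeed a well-defined algebra map; with these remarks your argument is complete.
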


\begin{proof}  Consider the closure $Y'$  of all pairs in $Y$ that are of the form of Lemma \ref{CD=DC}.   Note
that we can view this as a homogenous subvariety of pairs of matrices.   Thus,  the image of $\phi(Y')$ is closed.
Since there exists $(A,B) \in Y$ with $A$ regular nilpotent \cite{Ro1} each block of $A^p$ is a regular
$r \times r$ nilpotent matrix.  Note that
 $\phi(Y')$ is closed under conjugation
by block diagonal matrices which are constant on the diagonal and $0$ off the diagonal.  Thus, $\phi(Y')$ contains the closure
of all nilpotent commuting pairs $(C,D)$ with $C$ regular nilpotent.  By Premet's result, we see that $\phi(Y')$ is isomorphic
to the variety of commuting nilpotent pairs of $n \times n$ matrices and in particular is irreducible.
\end{proof}

We also recall from \cite{Ro1}, that if $A \in M_n(k)$ is nilpotent, then there exists $B \in M_n(k)$ nilpotent with
$[A,B] = \lambda I$ for some nonzero $\lambda$  if and only if all Jordan blocks of $A$ have size a multiple of $p$.

\begin{lemma}\label{fixedconjclass}
    Let $E$ be a conjugacy class of nilpotent matrices in $M_n(k)$ with all Jordan blocks of size a multiple of $p$ and let 
    $$\mathcal{V} =  \mathcal{V}_E = \{ (A,B)  \in E \times M_n(k) \mid B \ \text{nilpotent and}  \ [A,B]=\lambda I \text{ for
some } \lambda \in k^*\}.$$
Then $\mathcal{V}$ is an irreducible variety.
\end{lemma}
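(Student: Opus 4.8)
The plan is to pass to the simultaneous conjugation action, reduce to a fixed representative matrix, split the resulting fibre according to the scalar $\lambda$, handle the two strata with the Weyl‑algebra machinery of Section \ref{prelim}, and glue.

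\textbf{Reduction to one matrix.} The group $GL_n(k)$ acts on $\mathcal{V}$ by simultaneous conjugation, transitively on first coordinates (these sweep out the single class $E$). Fixing $A_0\in E$, the orbit map $GL_n\times\mathcal{V}_{A_0}\to\mathcal{V}$, $(g,B)\mapsto(gA_0g^{-1},gBg^{-1})$, is surjective, where $\mathcal{V}_{A_0}:=\{B\in M_n(k)\mid B\text{ nilpotent},\ [A_0,B]\in kI\}$; since $GL_n$ is irreducible it suffices to prove $\mathcal{V}_{A_0}$ irreducible. Writing $\mathcal{N}$ for the (irreducible) nilpotent cone of $M_n(k)$, we have $\mathcal{V}_{A_0}=L\cap\mathcal{N}$ for the linear subspace $L:=\{B\mid[A_0,B]\in kI\}$; by \cite{mypaper} the identity lies in the image of $\mathrm{ad}(A_0)$, so $L=kB_0\oplus C_{M_n}(A_0)$ for a fixed \emph{nilpotent} $B_0$ with $[A_0,B_0]=I$, and $\mathcal{V}_{A_0}$ is a cone.

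\textbf{The $\lambda=0$ stratum.} The scalar $\lambda$ with $[A_0,B]=\lambda I$ is a regular function on $\mathcal{V}_{A_0}$; let $\mathcal{V}^0:=\lambda^{-1}(0)$ (closed) and $\mathcal{V}^+:=\lambda^{-1}(k^\times)$ (open). Here $\mathcal{V}^0$ is precisely the nilpotent variety of the algebra $C_{M_n}(A_0)$, and this is \emph{irreducible}: since $C_{M_n}(A_0)/J\cong\prod_s M_{b_s}(k)$ (with $b_s$ the number of size‑$s$ Jordan blocks of $A_0$), an element of $C_{M_n}(A_0)$ is nilpotent iff its image in $\prod_s M_{b_s}(k)$ is, so $\mathcal{V}^0$ is the preimage of the irreducible set $\prod_s\mathcal{N}(M_{b_s}(k))$ under the linear surjection $C_{M_n}(A_0)\twoheadrightarrow\prod_s M_{b_s}(k)$, whence $\mathcal{V}^0$ is irreducible of dimension $\dim C_{M_n}(A_0)-\sum_s b_s$. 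Also, scaling $B\mapsto tB$ gives an isomorphism $k^\times\times\mathcal{V}^{(1)}\xrightarrow{\ \sim\ }\mathcal{V}^+$ with $\mathcal{V}^{(1)}:=\{B\text{ nilpotent}\mid[A_0,B]=I\}$, so $\mathcal{V}^+$ is irreducible iff $\mathcal{V}^{(1)}$ is.

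\textbf{The $\lambda\neq 0$ stratum (the heart).} A pair $(A_0,B)$ with $B\in\mathcal{V}^{(1)}$ makes $k^n$ a module over a finite‑dimensional quotient $R=\langle A_0,B\rangle$ of the Weyl algebra with $x,y$ nilpotent; by the Propositions of Section \ref{prelim}, $R\cong M_p(Z)$ with $Z=k[A_0^p,B^p]$ commutative, and the Morita equivalence identifies $k^n$ with a $Z$‑module of dimension $r$ on which $A_0^p$ acts with a fixed Jordan type $\mu$ (where $E$ has type $(p\mu_1,p\mu_2,\dots)$) and $B^p$ acts as a nilpotent matrix commuting with it; conversely, from any commuting nilpotent pair $(C,D)\in M_r(k)^2$ with $C$ of type $\mu$ one reconstructs such a $B$ by realizing $(k^r)^{\oplus p}$ as an $M_p(k[C,D])$‑module through which $W$ factors. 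Working $GL_n$‑equivariantly with $\widehat{\mathcal{V}}_E:=GL_n\cdot\mathcal{V}^{(1)}=\{(A,B)\mid A\in E,\ B\text{ nilpotent},\ [A,B]=I\}$ and the $p$‑th power morphism $(A,B)\mapsto(A^p,B^p)$, Lemma \ref{CD=DC} shows the image is $GL_n\cdot\{(C^{\oplus p},D^{\oplus p})\mid(C,D)\in\mathcal{C}_\mu\}$, where $\mathcal{C}_\mu\subseteq M_r(k)^2$ is the variety of commuting nilpotent pairs with first member of type $\mu$. Now $\mathcal{C}_\mu$ is irreducible, by the same centralizer argument as above (its fibre over $J_\mu$ is the nilpotent variety of $C_{M_r}(J_\mu)$, a preimage of an irreducible product of nilpotent cones under a linear surjection); and by Proposition \ref{Morita} the fibres of the $p$‑th power morphism on $\widehat{\mathcal{V}}_E$ are single orbits of the connected stabilizer of $(C^{\oplus p},D^{\oplus p})$, hence irreducible. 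A standard fibration argument — pass to the dense open locus of the irreducible image over which these fibres have minimal, hence constant, dimension, and take closures — then gives that $\widehat{\mathcal{V}}_E$, and therefore $\mathcal{V}^{(1)}$, is irreducible.

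\textbf{Gluing, and the main obstacle.} Finally $\mathcal{V}^0\subseteq\overline{\mathcal{V}^+}$: since $\mathcal{N}$ is a cone, $\mathcal{V}^0=C_{M_n}(A_0)\cap\mathcal{N}$ is the asymptotic cone at infinity of the affine variety $\mathcal{V}^{(1)}=(B_0+C_{M_n}(A_0))\cap\mathcal{N}$; the inclusion of that cone into $\mathcal{V}^0$ is immediate, and equality follows from the dimension equality $\dim\mathcal{V}^{(1)}=\dim\mathcal{V}^0$ (the nilpotency equations cut $\mathcal{V}^{(1)}$ out of $B_0+C_{M_n}(A_0)$ in the same codimension $\sum_s b_s$ in which they cut $\mathcal{V}^0$ out of $C_{M_n}(A_0)$) together with the irreducibility of $\mathcal{V}^0$; concretely, each nilpotent $B$ commuting with $A_0$ equals $\lim_{t\to0}tC_t$ for suitable $C_t\in\mathcal{V}^{(1)}$, so $(A_0,tC_t)\in\mathcal{V}^+$ tends to $(A_0,B)$. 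Hence $\mathcal{V}_{A_0}=\mathcal{V}^0\cup\mathcal{V}^+=\overline{\mathcal{V}^+}$ is irreducible, and so is $\mathcal{V}$. I expect the main obstacle to be Paragraph 3: the Morita description is a priori only a statement about isomorphism classes of modules (the equivalence $R\text{-mod}\simeq Z\text{-mod}$ depends on $B$), so one must genuinely produce the $p$‑th power morphism, prove surjectivity onto $\mathcal{C}_\mu$ via the explicit lifting, and control the jump in fibre dimension, being careful about exactly what Proposition \ref{Morita} does and does not assert for pairs with differing $\lambda$; the codimension bookkeeping in the gluing step is a secondary point needing care.
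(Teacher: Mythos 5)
Your reduction to the slice $\mathcal{V}_{A_0}$ and your treatment of the $\lambda=0$ stratum (the nilpotent cone of $C_{M_n}(A_0)$, irreducible because it is the preimage of a product of nilpotent cones under the linear surjection $C_{M_n}(A_0)\twoheadrightarrow\prod_s M_{b_s}(k)$) are correct, but the two steps you flag are genuine gaps, and the first is fatal as written. In the $\lambda\neq0$ stratum, ``irreducible image plus irreducible fibres, pass to the open locus of minimal fibre dimension and take closures'' is not a valid irreducibility criterion: for non-proper morphisms a reducible total space can map onto an irreducible base with all fibres irreducible of the same dimension, each fibre even a single orbit of a connected group acting equivariantly --- e.g.\ $\bigl(\{xy=1\}\cup\{(0,0)\}\bigr)\times\mathbb{A}^1\to\mathbb{A}^1$ with $\mathbb{G}_a$ translating the last factor. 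Taking the closure of the preimage of a good open locus produces one irreducible closed subset; you still must rule out components of $\widehat{\mathcal{V}}_E$ lying over the locus where the centralizer of $k[A^p,B^p]$, hence the fibre dimension, jumps. That exclusion is exactly the delicate point of the paper, and it is carried out only for the main theorem, using the fibre dimension theorem together with a closed irreducible ``anchor'' ($\mathcal{V}_{reg}$) mapping onto the whole image with controlled fibre dimension, plus density of good pairs --- and the density of good pairs in $\mathcal{V}_E$ is itself obtained in the paper downstream of the present lemma, so your route courts circularity. Two further holes: surjectivity of $\phi$ onto $\mathcal{C}_\mu$ requires an actual construction (the results of Section \ref{prelim} and Proposition \ref{Morita} go from a pair $(A,B)$ to $M_p(Z)$; reconstructing $(A,B)\in\widehat{\mathcal{V}}_E$ from an arbitrary commuting nilpotent $(C,D)$ of type $\mu$ needs a splitting/lifting argument you do not supply); and in the gluing step the recipe $B=\lim_{t\to0}tC_t$ is an assertion, not a proof --- the natural candidate $C_t=B_0+t^{-1}B$ satisfies the commutator equation but need not be nilpotent, and the ``same codimension $\sum_s b_s$'' count for the nilpotency equations on the affine slice $B_0+C_{M_n}(A_0)$ is unjustified.

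For comparison, the paper's proof of this lemma is short and avoids the Weyl-algebra machinery entirely: by Lemma 2.3 of \cite{mypaper} every pair in $\mathcal{V}_E$ can be conjugated into a block upper triangular normal form with constant diagonal blocks $(A_0,B_0)$ coming from the $p\times p$ case, after which the admissible pairs $(A,B+C)$, with $C$ ranging over the strictly block upper triangular elements centralizing $A$, form a linear (hence irreducible) family, and $\mathcal{V}_E$ is the image of $GL_n(k)\times(\text{this linear space})$ under the conjugation morphism, hence irreducible. Even if you repaired the lifting and fibration steps, your argument would essentially be redoing, class by class, the analysis the paper reserves for the main theorem, rather than the quick normal-form lemma needed at this stage.
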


\begin{proof}
    By conjugating we may assume without loss of generality that $A,B$ are block upper triangular, so of the form
    $$\begin{pmatrix}
A_1 & * & *\\
 & \ddots & * \\
 &  & A_l
\end{pmatrix}, 
\begin{pmatrix}
B_1 & * & *\\
 & \ddots & * \\
 &  & B_m
\end{pmatrix}.$$

Next, by Lemma $2.3$ in \cite{Ro1} and by conjugating on each block, it follows that the pair is similar to 

$$\begin{pmatrix}
A_0 & * & *\\
 & \ddots & * \\
 &  & A_0
\end{pmatrix}, 
\begin{pmatrix}
B_0 & * & *\\
 & \ddots & * \\
 &  & B_0
\end{pmatrix},$$
with $A_0,B_0$ corresponding to the $p \times p$ case.

Let $C \in M_n(k)$ centralize $A$ and be of the form 
$$C = \begin{pmatrix}
0 & * & *\\
 & \ddots & * \\
 &  & 0
\end{pmatrix}.$$
Then note that $[A,B+C] = [A,B] + [A,C] = \lambda I$, so the pair $(A,B+C) \in \mathcal{V}$.
Next, since the space $\{ (A,B+C)\}$ described above is a linear space (in particular it is irreducible) and there is a dominant map $GL_n(k) \times \mathcal{V} \rightarrow \mathcal{V}$ given by conjugating $(g,A,B) \mapsto (gAg^{-1},gBg^{-1})$, it follows that $\mathcal{V}$ is irreducible.
\end{proof}

\begin{lemma}\label{naturalmodulecyclic}
    Let $L \in M_r(k)$ be nilpotent. Then there exists nilpotent $M \in M_r(k)$ such that $T=k[L,M]$ is a commutative algebra and the natural module $V$ is a cyclic $T$-module.
\end{lemma}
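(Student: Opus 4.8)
The plan is to reduce to the single Jordan block case and then build $M$ explicitly. First I would write $L$ in Jordan form, so $L = \bigoplus_{i} J_{n_i}$ where $J_m$ denotes the nilpotent Jordan block of size $m$ and $n_1 \geq n_2 \geq \cdots$. On the natural module $V = k^r = \bigoplus_i V_i$, each $V_i$ is a cyclic $k[J_{n_i}]$-module. The obvious candidate for $M$ is a ``shift between blocks'': pick a generator $v_i$ of each $V_i$ (a vector at the top of the $i$-th Jordan string) and let $M$ send the bottom of the $i$-th string to $v_{i+1}$ and kill everything else; more precisely, if $e^{(i)}_1, \dots, e^{(i)}_{n_i}$ is a Jordan basis for $V_i$ with $L e^{(i)}_j = e^{(i)}_{j-1}$, set $M e^{(i)}_{n_i} = e^{(i+1)}_1$ and $M e^{(i)}_j = 0$ for $j < n_i$. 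Then $M$ is nilpotent (it is strictly ``lower'' with respect to the block filtration in a suitable sense), and one checks $LM = ML = 0$ termwise, so $T = k[L,M]$ is commutative — in fact spanned by monomials $L^a M^b$ with $b \leq (\text{number of blocks}) - 1$.

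The key point is then that $V$ is generated as a $T$-module by the single vector $v_1 = e^{(1)}_1$, the top of the largest Jordan string. Indeed, $L^j v_1$ runs through $V_1$ as $j$ varies, so $e^{(1)}_{n_1} \in T v_1$; applying $M$ gives $e^{(2)}_1 \in T v_1$, and then $L^j e^{(2)}_1$ gives all of $V_2$; iterating, $M L^{n_i - 1} e^{(i)}_1 = M e^{(i)}_{n_i} = e^{(i+1)}_1$, so by induction on $i$ every $V_i \subseteq T v_1$. Hence $T v_1 = V$ and $V$ is cyclic. I would want to double-check the edge cases: that $M$ is well-defined and nilpotent when there are several blocks of equal size (here the ordering of blocks is arbitrary but fixed, so there is no issue), and the trivial case where $L$ itself is regular nilpotent (one block), in which case $M = 0$ works since $V$ is already cyclic over $k[L]$.

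The main obstacle I anticipate is purely bookkeeping: verifying cleanly that $LM = ML = 0$ and that $M$ is nilpotent with the chosen basis, since $M$ mixes the blocks and one must be careful that $M$ does not create any new $L$-action interactions. The cleanest way to organize this is to note that $M(V_i) \subseteq V_{i+1}$ with image in $\ker L$, and $M$ annihilates $\operatorname{im} L$ restricted to each block except possibly $\ker L \cap V_i$ — but since $n_{i+1} \le n_i$, the composites still vanish. Once commutativity and nilpotence are in hand, the cyclicity argument above is straightforward. No deep input is needed; this is an elementary linear algebra construction, and the lemma will presumably be used in the next section to produce, for a given regular-or-arbitrary nilpotent $L = C$, a commuting nilpotent partner $D = M$ realizing $(C,D)$ as $(A^p, B^p)$ for some pair in $Y$ with the cyclicity feeding into the Weyl-algebra module picture.
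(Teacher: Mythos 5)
There is a genuine gap: the matrix $M$ you construct does not make $V$ cyclic. Under your own convention $L e^{(i)}_j = e^{(i)}_{j-1}$, the vector $e^{(i)}_1$ lies in $\ker L$, so the step ``applying $M$ gives $e^{(2)}_1 \in Tv_1$, and then $L^j e^{(2)}_1$ gives all of $V_2$'' fails: $L e^{(2)}_1 = 0$, so you only reach the single vector $e^{(2)}_1$, never the rest of the block (you have conflated the kernel vector of a Jordan string with its cyclic generator; note also that your claimed generator $v_1 = e^{(1)}_1$ is killed by $L$, so $L^j v_1$ does not run through $V_1$). The failure is structural, not just notational: since your $M$ satisfies $LM = ML = 0$, every mixed monomial $L^a M^b$ with $a,b \geq 1$ vanishes, so $T = k[L,M]$ is spanned by $1, L, \dots, L^{n_1-1}, M, M^2, \dots$ and has dimension at most $n_1 + s - 1$, where $s$ is the number of blocks; whenever a second block has size at least $2$ this is strictly less than $r$, so no vector can be a cyclic generator. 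Concretely, for $L$ with two Jordan blocks of size $2$ one checks that $Tv = \mathrm{span}(v, Lv, Mv)$ for every $v \in V$, of dimension at most $3 < 4$.

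The repair is essentially the paper's construction: instead of a map that kills $LV$, take $M$ to be the $k[L]$-module endomorphism of $V$ (so $[L,M] = 0$, not $LM = ML = 0$) determined by sending the cyclic generator $e_i$ of the $i$-th block to the cyclic generator $e_{i+1}$ of the $(i+1)$-st block, and $e_s \mapsto 0$; this is well defined precisely because the block sizes are weakly decreasing (the annihilator of $e_i$ is contained in that of $e_{i+1}$), it is nilpotent since $M$ shifts blocks forward so $M^s = 0$, and $T = k[L,M]$ is commutative. Cyclicity is then immediate from the feature your $M$ lacks: $M^{i-1} e_1 = e_i$ and $L^j e_i$ sweeps out the $i$-th block, so $V = T e_1$. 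The essential point is that $M$ must interlace the Jordan strings compatibly with the $L$-action (equivalently, induce a cyclic structure on $V/LV$, as the paper phrases it), rather than annihilate the interior of each string.
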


\begin{proof}
    Consider the natural module $V$  for $L$,
    $$V_L \cong k[x]/(x^{r_1}) \oplus  k[x]/(x^{r_2}) \oplus \cdots \oplus  k[x]/(x^{r_s}),$$
    with $r_1 + r_2 + \dots + r_s = r$ and assume $r_1 \geq r_2 \geq \dots \geq r_s$.
    Also let $e_1,e_2,\dots,e_s$ be generators of the corresponding cyclic modules in the above decomposition. Choose $M$ to be the unique matrix that commutes with $L$ and maps  $e_i \mapsto e_{i+1}$ for all $i=1,\dots,s-1$. 
    Then $V/LV$ is generated by the images of the $e_i$ in the quotient and for the above chosen $M$, $V/LV$ is cyclic.  Then $T$ is a commutative $k$-algebra and $V$ is a free rank one $T$-module whence the result follows.  
\end{proof}

\begin{lemma}  Let $n=pr$.  Let $A \in M_n(k)$ be a nilpotent matrix with all Jordan blocks of size a multiple of $p$.  Let $0 \ne \lambda \in k$.
    Then there exists a nilpotent matrix $B \in M_n(k)$ such that $[A,B] = \lambda I$ and $\dim k[A^p,B^p]=r$. 
\end{lemma}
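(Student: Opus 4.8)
The plan is to exhibit the matrix $B$ explicitly by building a convenient model of the pair $(A,B)$. Write the Jordan type of the given matrix $A$ as $(pa_1,\dots,pa_s)$ with $a_1\ge\cdots\ge a_s$ and $a_1+\cdots+a_s=r$, and let $L\in M_r(k)$ be the nilpotent matrix of Jordan type $(a_1,\dots,a_s)$. Apply Lemma~\ref{naturalmodulecyclic} to $L$: it produces a nilpotent $M\in M_r(k)$ such that $T:=k[L,M]$ is commutative and the natural module $k^r$ is a free rank-one $T$-module, so that $\dim_k T=r$. Note that $T$ is a local $k$-algebra whose maximal ideal $\mathfrak m$ is nilpotent and contains $L$ and $M$. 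Now set
\[
V_0:=T[z]/(z^p-L),
\]
a free $T$-module of rank $p$, hence a $k$-space of dimension $pr=n$, and let $A$ act on $V_0$ as multiplication by $z$. After identifying $V_0$ with $k^n$ and conjugating, we may assume this operator is the given $A$ — provided it has the right Jordan type, which is the first point to check.

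To see that multiplication by $z$ on $V_0$ has Jordan type $(pa_1,\dots,pa_s)$: its $p$-th power is multiplication by $L$, and since $k^r\cong T$ as a $T$-module the operator $L$ acts on $T$ with Jordan type $(a_1,\dots,a_s)$; viewing $V_0$ as $k[z]\otimes_{k[z^p]}T$ with $z^p$ acting on $T$ as $L$, writing $T\cong\bigoplus_i k[L]/(L^{a_i})$ as a $k[z^p]$-module, and inducing up along $k[z^p]\hookrightarrow k[z]$ replaces each summand by $k[z]/(z^{pa_i})$, which gives the claim. Next let $\partial_z$ be the $T$-linear derivation of $V_0$ with $\partial_z(z)=1$; this is well defined since $\partial_z(z^p-L)=pz^{p-1}=0$, and it satisfies $[\,z\cdot,\partial_z\,]=-I$ and $\partial_z^{\,p}=0$. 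For any $h\in\mathfrak m\cdot V_0$ put $B:=-\lambda(\partial_z+h\cdot)$, where $h\cdot$ denotes multiplication by $h$. Since multiplication operators commute with multiplication by $z$, we get $[A,B]=-\lambda[\,z\cdot,\partial_z\,]=\lambda I$ for free, and $B$ is nilpotent because modulo $\mathfrak m V_0$ it reduces to $-\lambda\overline{\partial_z}$ on $k[z]/(z^p)$, which is nilpotent. The remaining freedom is the choice of $h$, which I would use to control $B^p$.

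Since $A^p$ is multiplication by $L$, we have $k[A^p,B^p]\supseteq k[L]\cdot I$; the goal is to choose $h$ so that $B^p$ is multiplication by a nonzero scalar multiple of $M$, which forces $k[A^p,B^p]=k[L,M]\cdot I\cong T$ and hence $\dim_k k[A^p,B^p]=r$ (it is at most $r$ in any case, since $L,M$ generate a commutative subalgebra of $M_r(k)$). Take $h=c\,z^{p-1}$ with $c\in\mathfrak m$ to be determined. By Jacobson's formula for $p$-th powers in an associative ring, applied to the derivation $\partial_z$ (which has $\partial_z^{\,p}=0$) and the multiplication operator $h\cdot$,
\[
(\partial_z+h\cdot)^p=\bigl(h^p+\partial_z^{\,p-1}(h)\bigr)\cdot,
\]
and here $h^p=c^pz^{p(p-1)}=c^pL^{p-1}$ while $\partial_z^{\,p-1}(cz^{p-1})=(p-1)!\,c=-c$ by Wilson's theorem; thus $B^p=-\lambda^p(c^pL^{p-1}-c)\cdot$. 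It therefore suffices to solve
\[
c=c^pL^{p-1}-M
\]
for some $c\in\mathfrak m$. The map $c\mapsto c^pL^{p-1}-M$ carries $\mathfrak m$ into itself, and since $T$ is a commutative $\mathbb F_p$-algebra the iterates satisfy $c_{k+1}-c_k=(c_k-c_{k-1})^pL^{p-1}$, so they lie in ever-higher powers of the nilpotent ideal $\mathfrak m$; the iteration stabilizes at a fixed point. With this $c$ we get $B^p=-\lambda^pM\cdot$, so $k[A^p,B^p]=k[L,\,-\lambda^pM]\cdot I=k[L,M]\cdot I$ (using $\lambda\ne0$), which has dimension $r$, as desired.

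The main obstacle is precisely the $p$-th power $B^p$. The obvious candidate $B=-\lambda\,\partial_z$ already satisfies $[A,B]=\lambda I$ and is nilpotent, but $\partial_z^{\,p}=0$ — equivalently, the $p$-th iterate of $d/dz$ vanishes on any truncated polynomial ring in characteristic $p$ — so $B^p=0$ and $k[A^p,B^p]=k[L]$ is in general strictly smaller than dimension $r$. The resolution is to add a multiplication correction that disturbs neither $[A,B]$ nor the nilpotency of $B$, yet makes $B^p$ realize the ``missing'' generator $M$ of $T$; turning this into the clean identity and the nilpotent iteration above is where the real work lies.
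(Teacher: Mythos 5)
Your construction is correct, but it takes a genuinely different route from the paper's own proof. The paper argues structurally and very briefly: it works in the block-diagonal model $Y'$ from Lemma~\ref{CD=DC} and Proposition~\ref{image_irreducible}, where the Morita/Weyl-algebra picture identifies $(A^p,B^p)$ with a commuting pair of $r\times r$ blocks repeated along the diagonal, and then asserts (with Lemma~\ref{naturalmodulecyclic} supplying the self-centralizing commutative pair) that $B$ can be chosen so that $k[A^p,B^p]$, viewed in $M_r(k)$, has dimension $r$. You use the same essential input $(L,M)$ from Lemma~\ref{naturalmodulecyclic}, but instead of invoking the Morita machinery you build the lift explicitly: $V_0=T[z]/(z^p-L)$ with $A$ acting as $z\cdot$ (and your verification that this operator has Jordan type $(pa_1,\dots,pa_s)$, via induction along $k[z^p]\hookrightarrow k[z]$, is right), and $B=-\lambda(\partial_z+cz^{p-1}\cdot)$, where the identity $(\partial_z+h\cdot)^p=\partial_z^{\,p}+\bigl(h^p+\partial_z^{\,p-1}(h)\bigr)\cdot$ is indeed valid -- it is the rank-one $p$-curvature form of Jacobson's formula, since $\mathrm{ad}(t\,h\cdot+\partial_z)^{p-1}(h\cdot)=\partial_z^{\,p-1}(h)\cdot$ collapses all but one correction term -- and the Artin--Schreier-type equation $c=c^pL^{p-1}-M$ is solvable in $\mathfrak m$ by your fixed-point iteration because $\mathfrak m$ is nilpotent. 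What your approach buys is a self-contained, fully explicit lift with exact control of $B^p$ (namely $B^p=-\lambda^pM\cdot$), independent of Proposition~\ref{image_irreducible} and of the Azumaya/Morita analysis; what the paper's approach buys is brevity, since that machinery is already in place and is the framework used for the rest of the argument. One small point you should make explicit: nilpotency of $B$ does not follow solely from its reduction modulo $\mathfrak m V_0$; rather, since $\partial_z$ is $T$-linear and $h\in\mathfrak m V_0$, the operator $B$ preserves each $\mathfrak m^{j}V_0$ and induces $-\lambda\partial_z$ on every graded quotient, which is killed by its $p$-th power, so $B$ is nilpotent because $\mathfrak m$ is.
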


\begin{proof}  As in the proof of Proposition \ref{image_irreducible}, we work in the variety $Y'$.  Then the image of $\phi$ contains $(A^p, B^p)$ which
have constant diagonal for any such $B^p$ commuting with $A^p$.  In particular, we can choose $B$ so that $k[A^p, B^p]$ (viewed in $M_r(k)$)
is self-centralizing (in $M_r(k)$) of dimension $r$.  
     \end{proof}

\begin{definition}
    Call a pair of nilpotent elements $(A,B) \in M_n(k) \times M_n(k)$ \textbf{good} if $[A,B] = \lambda I, \lambda \ne 0$ and $\dim k[A^p,B^p]=r$, where $n=pr$.
\end{definition}

We remark that the condition $\dim k[A^p,B^p]=r$ is an open condition (since by Lemma 1,  $k[A^p, B^p]$ is a commutative $2$-generated subalgebra
of $M_r(k)$ and so $\dim k[A^p, B^p] \le r$).    If $E$ is the conjugacy class of $A$, since $E$ is irreducible, we see that the set of good 
$(A,B) \in \mathcal{V}_E$ is an open Zariski dense subset of $\mathcal{V}_E$.

\begin{lemma} \label{fiber}   Let $(A,B) \in Y$ be good.   Then $\phi^{-1}(A^p,B^p)$ is an irreducible  closed subvariety of  $Y$ of dimension 
$p^2r- r$.
\end{lemma}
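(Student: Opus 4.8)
The plan is to show that the fibre $\phi^{-1}(A^p,B^p)$ is a single orbit for a suitable connected group of conjugations --- which gives irreducibility for free --- and then to read off the dimension from the stabiliser.

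First I would fix the algebra. Set $Z:=k[A^p,B^p]$. Since $(A,B)$ is good, $Z$ is a local $k$-algebra of dimension $r$ with maximal ideal generated by the nilpotent elements $A^p,B^p$, and, as in Lemma~\ref{CD=DC} (this is exactly the content of $\dim_kZ=r$, cf.\ Lemma~\ref{naturalmodulecyclic}), the natural module $k^n$ is a \emph{free} $Z$-module of rank $p$. Hence by Morita theory $Z_{M_n(k)}(Z)=\operatorname{End}_Z(k^n)\cong M_p(Z)$, of dimension $p^2r$. Now let $(A',B')$ be any point of the fibre. Then $k[A'^p,B'^p]=k[A^p,B^p]=Z$, so $(A',B')$ is again good; moreover $A'$ and $B'$ commute with $Z$, because $A'^p$ and $(B'/\lambda')^p$ are central in $\langle A',B'\rangle$, where $[A',B']=\lambda'I$. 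Thus $\langle A',B'\rangle\subseteq Z_{M_n(k)}(Z)\cong M_p(Z)$, and since the Proposition on finite-dimensional images of the Weyl algebra gives $\langle A',B'\rangle\cong M_p(Z(\langle A',B'\rangle))=M_p(Z)$, a dimension count forces $\langle A',B'\rangle=Z_{M_n(k)}(Z)=:R$, the \emph{same} subalgebra of $M_n(k)$ at every point of the fibre.

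Next let $H:=R^{\times}$. Then $H=Z_{GL_n(k)}(A^p,B^p)$, and $H$ is a connected algebraic group of dimension $\dim_kR=p^2r$: the reduction $R^{\times}\to (R/J(R))^{\times}=GL_p(k)$ is surjective with unipotent kernel $1+J(R)$, and both factors are connected. Conjugation by $H$ fixes $A^p$ and $B^p$, so $H$ acts on $\phi^{-1}(A^p,B^p)$, and I claim this action is transitive. Indeed, two points of the fibre have the same commutator $\lambda$, so by Proposition~\ref{Morita} they are conjugate in $GL_n(k)$; and any $g$ effecting such a conjugacy necessarily conjugates $A^p$ to $A^p$ and $B^p$ to $B^p$, hence $g\in Z_{GL_n(k)}(A^p,B^p)=H$. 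Therefore $\phi^{-1}(A^p,B^p)$ is a single $H$-orbit, so it is irreducible, and it is closed in $Y$ as the fibre of the morphism $\phi$. For the dimension, the stabiliser of $(A,B)$ in $H$ is $R^{\times}\cap Z_{M_n(k)}(R)$, and by Morita again $Z_{M_n(k)}(R)=\operatorname{End}_R(k^n)=\operatorname{End}_{M_p(Z)}(Z^{\oplus p})=Z$, so this stabiliser is $R^{\times}\cap Z=Z^{\times}$, open dense in $Z$ and hence of dimension $r$. Consequently
\[
\dim\phi^{-1}(A^p,B^p)=\dim H-\dim Z^{\times}=p^2r-r .
\]

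The step I expect to be the main obstacle is the transitivity of the $H$-action. One has to be certain that Proposition~\ref{Morita} produces an honest $GL_n(k)$-conjugacy rather than one only up to rescaling the second matrix --- this is why the commutator must be held constant along the fibre --- and one has to justify carefully, via the Morita equivalence, the two centraliser identities $Z_{M_n(k)}(Z)\cong M_p(Z)$ and $Z_{M_n(k)}(M_p(Z))\cong Z$, the essential point for both being that goodness makes $k^n$ free, not merely faithful, over $Z$. A more hands-on variant replaces Proposition~\ref{Morita} by the following: conjugate $A'$ by $H$ to the companion matrix $A_0$ of $t^p-A^p$ over $Z$ (legitimate by Nakayama, since $A'$ is regular nilpotent modulo $J(Z)$), so that $\{B':(A_0,B')\in\phi^{-1}(A^p,B^p)\}$ sits inside the coset $B_*+Z[A_0]$ with $\dim_kZ[A_0]=pr$; the condition $B'^p=B^p$ is then $r$ equations (note $B'^p$ is automatically central, i.e.\ lies in $Z$), and one shows that this locus is a single orbit of the connected group $Z[A_0]^{\times}$ through $B_*$, of dimension $pr-r$, again giving $\dim\phi^{-1}(A^p,B^p)=p^2r-r$. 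The genuine work in that variant is the irreducibility/orbit statement for this locus, which amounts to controlling the map $B'\mapsto B'^p$.
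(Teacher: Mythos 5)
Your argument is essentially the paper's own proof, carried out in more detail: both identify the fibre with an orbit of the unit group of the centralizer of $k[A^p,B^p]$ via Proposition \ref{Morita}, deduce irreducibility from the irreducibility (connectedness) of that group, and get the dimension as $\dim Z_{M_n(k)}(k[A^p,B^p])-\dim Z_{M_n(k)}(k[A,B])=p^2r-r$. The extra steps you supply (freeness of $k^n$ over $Z$, connectedness of $H=R^{\times}$, the stabiliser being $Z^{\times}$) are exactly the details the paper leaves implicit, so on the paper's terms you have reproduced its argument.

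The point you yourself flag as the main obstacle --- that the scalar $\lambda$ is constant along the fibre, so that Proposition \ref{Morita} yields honest conjugacy --- is asserted rather than proved in your write-up, and it is equally unaddressed in the paper's proof. It is a real issue, not a formality: goodness does not force $B^p\neq 0$. For $r=1$ every pair in $Y$ has $A^p=B^p=0$ and is good; more generally, for $A$ regular nilpotent acting as multiplication by $t$ on $k[t]/(t^{pr})$ and $B$ a nonzero scalar multiple of $d/dt$, one has $[A,B]=\lambda I$ with $\lambda\neq0$, $\dim k[A^p,B^p]=\dim k[A^p]=r$, yet $B^p=0$ since $(d/dt)^p=0$. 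For such a good pair, $(A,\mu B)\in\phi^{-1}(A^p,B^p)$ for every $\mu\neq 0$, and these points have pairwise distinct commutators $\mu\lambda I$, hence lie in distinct $H$-orbits; the fibre is then a one-parameter union of orbits of dimension $p^2r-r$, not a single orbit, and the stated dimension fails. So your transitivity claim needs either a proof that $(A^p,B^p)$ determines $\lambda$ for the pairs actually used, or a strengthening of the definition of good --- but this criticism applies verbatim to the paper's proof, which silently relies on the same claim through Proposition \ref{Morita}. A smaller shared soft spot: $\dim_k Z=r$ is not by itself ``exactly the content'' of $k^n$ being $Z$-free of rank $p$ (an $r$-dimensional commutative subalgebra of $M_r(k)$ need not be self-centralizing), so the freeness should be traced to the cyclicity argument of Lemma \ref{naturalmodulecyclic} or to the particular choice of $B$ in the lemma preceding the definition of good, rather than to the dimension count alone.
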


\begin{proof}  Suppose $(A_0, B_0) \in Y$ with $\phi(A_0,B_0)=\phi(A,B)$.   By the Morita correspondence, $(A_0,B_0)$ is conjugate to $(A,B)$
and so necessarily by an invertible element of the centralizer of $k[A^p, B^p]$.  This is irreducible and so its orbit is as well and so the fiber
is irreducible.  The dimension is the difference of the dimension of the centralizer of $k[A^p,B^p]$ and the centralizer of $k[A,B]$.   The latter 
is $k[A^p,B^p]$ (because this is the center of $k[A,B] \cong M_p(k[A^p,B^p]$).  This gives the formula for the dimension of the orbit.  
\end{proof}    

\noindent
{\bf Proof of Theorem 1}

 Let $\mathcal{V}_{reg}$ denote the closure of $\mathcal{V}_E$ with $E$ the conjugacy class of regular nilpotent matrices.   As we have already noted,
 $\phi(\mathcal{V}_{reg}) = \phi(\bar{Y})$.   Any $(A,B) \in Y$ with $A$ regular nilpotent is good and so we see the generic fiber for the 
 map $\phi: \mathcal{V}_{reg} \rightarrow \phi(Y)$ has dimension $p^2r-r$ and so every fiber has at least that dimesion.
 If $(A,B)$ is good, then the full inverse image of $(A^p, B^p)$ is irreducible of dimension $p^2r-r$ and so this inverse image must be contained in
 $\mathcal{V}_{reg}$.   Since the good pairs are dense in $Y$,   $\mathcal{V}_{reg} $ contains $Y$ and since it is closed, it equals  
 $X$ and so $X$ is irreducible.  
 
 Finally we compute the dimension of $X$.   Consider the projection from $X$ to the set of nilpotent matrices.   This is a surjection (as $X$ contains
 the subvariety of commuting pairs) and so the image of this projection has dimension $n^2 - n$.  The generic fiber has dimesion $n$
 since generically the projection is a regular nilpotent element and for $A$ regular nilpotent the set of nilpotent $B$ with
 $[A,B]=\lambda I$ is a variety of dimension $n-1$ for a fixed $\lambda$ and so the full fiber has dimension $n$.  Thus, 
 $X$ is irreducible of dimension $n^2$.  This proves Theorem 1.

\section{From characteristic zero to positive characteristic}

In this section we give a new proof of Premet's result \cite{Premet} about the irreducibility of the variety of commuting
nilpotent pairs in $\mathfrak{gl_n}(k)$ for $k$ any algebraically closed field.  We show that the result follows
from the corresponding result in characteristic zero.  Premet gave an easier proof in characteristic zero but this had already
been proved by Baranovosky \cite{Baranovsky}, who showed that it followed by the irreducibility of a certain Hilbert scheme.
A consequence of Premet's result was the irreducibility of the corresponding Hilbert scheme in all characteristics.

Indeed, we illustrate that these methods show that many results about the structure of commuting varieties and commuting
nilpotent varieties in good characteristic follow from the corresponding characteristic zero results.  

The key ideas in the proof are the Principal Ideal Theorem for commutative rings \cite{Hartshorne1977} and the structure of the centralizer of an element $x \in \mathfrak{g}$ where $\mathfrak{g}$ is a simple Lie algebra in good characteristic.

We first give an elementary result in commutative algebra.  In fact, a stronger statement is true but
we do not need it.

\begin{lemma} \label{affine}  Let $A$ be a finitely generated $\Z$-algebra and assume that $A$ is torsion-free.
Let $p$ be a prime which is not invertible in $A$.
Let $d$ be the minimal length of a maximal chain of prime ideals of $A$.
\begin{enumerate}
\item  There is a maximal chain of primes ideals of length less than $d$ in $B:= A \otimes \Q$.
\item    Every maximal chain of prime ideals in $A/pA$ has length at least $d-1$.
\end{enumerate}
\end{lemma}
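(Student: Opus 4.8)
The plan is to prove the two parts by rather different means: part (2) by an elementary chain-extension argument, and part (1) by the dimension theory of finitely generated $\Z$-algebras. Two standard facts will be used. First, since $A$ is $\Z$-torsion-free it is flat over $\Z$, so $\Z \hookrightarrow A$ satisfies going-down; hence every minimal prime of $A$ contracts to $(0)$ in $\Z$, since one contracting to $(p')$ for a prime $p'$ would properly contain a prime lying over $(0)$. Thus for any minimal prime $\mathfrak{p}$ of $A$, the quotient $D := A/\mathfrak{p}$ is a domain of characteristic $0$ finitely generated over $\Z$, and $B/\mathfrak{p}B \cong D \otimes \Q$ is a quotient of $B = A \otimes \Q$. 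Second, for such a $D$ and any maximal ideal $\mathfrak{m}$ of $D$, the residue field $D/\mathfrak{m}$ is finite (the Nullstellensatz over $\Z$), so $\mathfrak{m} \cap \Z \ne (0)$, and the dimension formula, valid because $\Z$ is universally catenary, gives $\operatorname{ht}(\mathfrak{m}) = 1 + \dim(D \otimes \Q)$; since $D$ is moreover catenary, every maximal chain of primes of $D$ has length $1 + \dim(D \otimes \Q)$, independent of its top. As a maximal chain of primes of $A$ begins at a minimal prime $\mathfrak{p}$ and descends to a maximal chain of $A/\mathfrak{p}$, this yields the identity $d = \min_{\mathfrak{p}}\bigl(1 + \dim((A/\mathfrak{p}) \otimes \Q)\bigr)$, the minimum taken over minimal primes $\mathfrak{p}$ of $A$.

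For part (1), I would choose a minimal prime $\mathfrak{p}$ of $A$ attaining the minimum above. Then $\mathfrak{p}B$ is a minimal prime of $B$ and $B/\mathfrak{p}B \cong (A/\mathfrak{p}) \otimes \Q$ is an affine $\Q$-domain of dimension $d - 1$, so any maximal chain of primes in it has length $d - 1$; pulling such a chain back through $\mathfrak{p}B$ produces a maximal chain of primes of $B$ of length $d - 1 < d$, as required.

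For part (2), I would take an arbitrary maximal chain of primes $\overline{\mathfrak{q}}_0 \subsetneq \cdots \subsetneq \overline{\mathfrak{q}}_m$ of $A/pA$ and lift it to a chain $(p) \subseteq \mathfrak{q}_0 \subsetneq \cdots \subsetneq \mathfrak{q}_m$ in $A$. The top term $\mathfrak{q}_m$ is maximal in $A/pA$, hence a maximal ideal of $A$; and since $A$ is torsion-free, $p$ is a non-zero-divisor, so it lies in no minimal prime of $A$, whence $\mathfrak{q}_0$, being minimal over the principal ideal $(p)$, has height exactly $1$ by Krull's principal ideal theorem, and I may pick a minimal prime $\mathfrak{p} \subsetneq \mathfrak{q}_0$ of $A$. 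The chain $\mathfrak{p} \subsetneq \mathfrak{q}_0 \subsetneq \cdots \subsetneq \mathfrak{q}_m$ is then a maximal chain of primes of $A$: nothing can be inserted between $\mathfrak{p}$ and $\mathfrak{q}_0$ because $\operatorname{ht}(\mathfrak{q}_0) = 1$, nothing between consecutive $\mathfrak{q}_i$ and $\mathfrak{q}_{i+1}$ because such a prime would contain $p$ and contradict the maximality of the original chain in $A/pA$, and the two ends are a minimal prime and a maximal ideal. By the definition of $d$ this chain has length $m + 1 \ge d$, that is, $m \ge d - 1$.

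The step I expect to be the main obstacle is the second of the two standard facts, namely the computation $\operatorname{ht}(\mathfrak{m}) = 1 + \dim(D \otimes \Q)$ and the resulting identity for $d$: this is what makes the minimal-length hypothesis interact correctly with passage to the generic fibre $B$, and it is the only place where the less elementary inputs (the dimension formula, catenariness, and the $\Z$-Nullstellensatz for finitely generated $\Z$-algebras) are needed. Part (2), by contrast, uses nothing beyond Krull's principal ideal theorem and the definition of $d$.
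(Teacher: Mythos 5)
Your proof is correct. For part (2) you argue essentially exactly as the paper does: lift a maximal chain of $A/pA$ to $A$, observe that its bottom is minimal over $(p)$ and hence of height one by Krull's principal ideal theorem (torsion-freeness making $p$ a nonzerodivisor, so it avoids minimal primes), prepend a minimal prime, check saturation, and compare with $d$. For part (1) your route is genuinely different. The paper reduces to the case that $A$ is a domain, contracts a maximal chain of $B$ back into $A$, and uses the Nullstellensatz over $\Z$ (maximal ideals of a finitely generated $\Z$-algebra have finite residue fields) to see that the top contraction is not maximal, concluding the chain in $B$ has length less than $\dim A = d$; the equality $\dim A = d$ after this reduction — i.e.\ that all maximal chains of a finitely generated $\Z$-domain of characteristic zero have the same length — is left implicit there. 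You instead prove this input outright: flatness/going-down shows every minimal prime of $A$ contracts to $(0)$ in $\Z$, and the dimension formula over the universally catenary ring $\Z$ together with catenariness of $A/\mathfrak{p}$ yields $d = \min_{\mathfrak{p}}\bigl(1 + \dim((A/\mathfrak{p})\otimes\Q)\bigr)$, after which you exhibit a maximal chain of $B$ of length exactly $d-1$ through the minimal prime $\mathfrak{p}B$. Your version costs more machinery (dimension formula, catenariness, the $\Z$-Nullstellensatz) but makes explicit the equidimensionality fact the paper's phrase ``so $A$ has Krull dimension $d$'' glosses over, and it gives the slightly sharper output that some maximal chain of $B$ has length exactly $d-1$; the paper's contraction argument is shorter and shows that, for the domain quotient, every maximal chain of $B$ has length less than $d$.
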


\begin{proof}  Without loss of generality, we may assume that  $A$ is an integral domain (quotient by
the minimal prime in the appropriate chain) and so $A$ has Krull dimension $d$.   If 
$Q_0 = 0 < Q_1 < \ldots < Q_r$ is a maximal chain of ideals in $B$, then $Q_i \cap A \le Q_{i-1} \cap A$
and so $0 < Q_1 \cap A  < \ldots < Q_r \cap A$.   By the Nullstellensatz,  $Q_r \cap A$ is not a maximal ideal
and so $r < d$ as claimed.

The prime ideals of $A/pA$ corresponds to the prime ideals of $A$ containing $p$.   By the principal ideal theorem \cite{Hartshorne1977},
any prime minimal over $pA$ has  height at most $1$ and so a maximal chain in $A$ in which the minimal prime contains
$A$ gives rise to a maximal chain in $A/pA$ of length at least $d-1$.
\end{proof}

We apply this in the following setup.   Let $X$ be a variety defined over $\Z$ and let $I$ be the ideal defining this variety and assume
that $A=\mathbb{Z}[X]/I$ is torsion-free.
Let $J$ be the reduced ideal defining the variety over $\Q$ (so $I < J$).    Let $B = \Q[X] = \Q[x_i]/J$ and set $A= \Z[X]/(J \cap \Z[x_i])$.
Applying the previous result implies that every maximal chain of prime ideals in $A/pA$ has length at least that of the minimal such
for $B$ (indeed one sees that if $B$ has all maximal chains of the same length, the same is true for $A/pA$ -- we will not use this).

The variety corresponding to $A/pA$ is contained in the variety over the field corresponding to $I$ and every component 
of this variety has dimension at least the dimension of the smallest irreducible component of $X$.
 One issue that arises is to identify the variety associated to $A/pA$.  

We use this to give a proof of Premet's result about commuting nilpotent pairs in simple Lie algebras in good characteristic by showing it suffices
to know the result in characteristic zero.   

Fix a prime $p$.  Let $\mathfrak{g}$ be a simple Lie algebra and assume that $p$ is good for $\mathfrak{g}$.   To simply the proof, we assume that
$\mathfrak{g}$ is of type $A, B, C$ or $D$ (and so $p$ is odd for the latter three cases).   Let $G$ be the corresponding algebraic group.  

Let $A$ be the affine $\Z$-algebra so that the corresponding $\Q$-algebra corresponds to the variety $X$ of commuting nilpotent pairs. 

 Let $K$ be the algebraic closure of $\Q$ and $k$ the algebraic closure of the field of $p$-elements.  Let $X(K)$ be the variety
of commuting nilpotent pairs of elements in $\mathfrak{g}(K)^2$ and $X(k)$ the variety
of commuting nilpotent pairs of elements in $\mathfrak{g}(k)^2$.   Let $\mathcal{O}$ denote the ring of all algebraic integers in $K$ and let
$\mathcal{P}$ be a maximal ideal of $\mathcal{O}$ containing $p$.

Note that every line in $X(K)$ intersects $X(\mathcal{O})$.   Let $N \in \mathfrak{g}(k)$ be nilpotent.  Then we can choose $N$ to be in Jordan form. 
Let  $N' \in \mathfrak{g}(\mathcal{O})$ also be in Jordan canonical form so that $N$ is the  reduction of $N'$  modulo $\mathcal{P}$.  
The structure of the centralizer in $\mathfrak{g}$ of $N$ (and $N'$) is well known and it clear that the integral points of the centralizer of $N'$ surject onto
the $k$-points of the centralizer of $A$.   It is straightforward to show that any nilpotent element of the centralizer of $N$ lifts to an integral nilpotent
element of the centralizer as well of $N'$.   Thus, any pair with first coordinate $N$ lifts to a point of $X(\mathcal{O})$. 
 Since we can conjugate by $G(\mathcal{O})$ which surjects onto $G(k)$, it follows that $X(\mathcal{O})$ surjects onto $X(k)$. 
 This shows that $X(k)$ is the variety of commuting nilpotent pairs.     
 
 Note that in bad characteristic, there are extra classes and moreover, even for classes that do correspond, centralizers can be larger.
 Thus the proof (and result) fail in that case.

 By Lemma \ref{affine},  every component of $X(k)$ has dimension at least that of $X(K)$ (and indeed we have equality).   
 Since there are only finitely many classes of nilpotent elements,
 any component of $X(k)$ would have an open dense subset of the form $(C,D)$ with $C$ in a fixed conjugacy class and $D$ a nilpotent element
 of the centralizer of $C$.   Since we are in good characteristic,  the dimension of the centralizer of $D$ in $\mathfrak{g}$ is the same as the dimension
 of the centralizer in the corresponding algebraic group.  Thus, the dimension of this component is at most $\dim \mathfrak{g}$ with equality if and only if
 the centralizer of $C$ consists only of nilpotent elements, i.e. the class of $C$ is distinguished.   It follows that the components of $X$ are in bijection
 with the classes of distinguished nilpotent elements, as stated by Premet.
 
 The same proof applies for the exceptional Lie algebras in good characteristic using the characterization of centralizers given in \cite{liebeck-seitz}.

 A similar proof shows that the variety of commuting pairs in $\mathfrak{g}$ in good characteristic is irreducible of dimension equal to
 $\dim (\mathfrak{g}) + \rank(\mathfrak{g})$ once one has the result for characteristic zero.   Motzkin and Taussky \cite{Motzkin-Taussky} proved the result in all characteristics
 for $\mathfrak{gl}_n$ and Richardson \cite{Richardson} proved this result in characteristic zero for all $\mathfrak{g}$.  Thus, Richardson's result implies the result in good
 characteristic.  This was proved by Levy \cite{Levy}.    Indeed, the result is that the variety of commuting pairs in $\mathfrak{g}$ is the closure of the union of
 all conjugates of $\mathfrak{t} \times \mathfrak{t}$ where $\mathfrak{t} = \mathrm{Lie}(T)$ for $T$ a maximal torus of the algebraic group.  Induction on
 dimension shows that if the result fails, there would be a component consisting of commuting nilpotent pairs. The dimension of all commuting nilpotent pairs has dimension equal to $\dim (\mathfrak{g})$, but the result above (using Richardon's result in characteristic zero)  implies that the dimension of any component is at least $\dim (\mathfrak{g}) + \dim (\mathfrak{t})$ and the result holds. 
 
We note also that the result for nilpotent pairs implies the result for all commuting pairs.

\printbibliography

@article{Motzkin-Taussky,
   keywords = {M-T},
   author = {Motzkin, T. and Taussky, O.},
    title = "{Pairs of matrices with property L. II}",
  journal = {Trans. Amer. Math. Soc. 80, 387-401},
     year = 1955
}

@article{Gerstenhaber,
  title={On Dominance and Varieties of Commuting Matrices},
  author={Gerstenhaber, M.},
  journal={Annals of Mathematics},
  year={1961},
  volume={73},
  pages={324}
}

@article{Richardson,
author = {Richardson, R. W.},
journal = {Compositio Mathematica},
keywords = {Semisimple Lie Algebras; Irreducible Algebraic Variety; Reductive Lie Algebras; Simply Connected Semisimple Algebraic Groups; Cartan Subalgebra},
language = {eng},
number = {3},
pages = {311-327},
publisher = {Sijthoff et Noordhoff International Publishers},
title = {Commuting varieties of semisimple Lie algebras and algebraic groups},
url = {http://eudml.org/doc/89407},
volume = {38},
year = {1979},
}

@article{Kirillov-Neretin,
  title={The variety $A_n$ of n-dimensional Lie algebra structures},
  author={Alexander A. Kirillov and Yury A. Neretin},
  journal = {Trans. Amer. Math. Soc. 137, Vol. 2},
  year={1987}
}

@article{Premet,
author = {Premet, Alexander},
year = {2003},
month = {03},
pages = {},
title = {Nilpotent commuting varieties of reductive Lie algebras},
volume = {154},
journal = {Inventiones Mathematicae},
doi = {10.1007/s00222-003-0315-6}
}

@article{Levy,
title = {Commuting Varieties of Lie Algebras over Fields of Prime Characteristic},
journal = {Journal of Algebra},
volume = {250},
number = {2},
pages = {473-484},
year = {2002},
issn = {0021-8693},
doi = {https://doi.org/10.1006/jabr.2001.9083},
url = {https://www.sciencedirect.com/science/article/pii/S0021869301990830},
author = {Paul Levy},
abstract = {Let K be an algebraically closed field of positive characteristic and let G be a reductive group over K with Lie algebra g. This paper will show that under certain mild assumptions on G, the commuting variety C(g) is an irreducible algebraic variety.}
}

@article {Revoy,
    AUTHOR = {Revoy, Philippe},
     TITLE = {Alg\`ebres de {W}eyl en caract\'{e}ristique {$p$}},
   JOURNAL = {C. R. Acad. Sci. Paris S\'{e}r. A-B},
  FJOURNAL = {Comptes Rendus Hebdomadaires des S\'{e}ances de l'Acad\'{e}mie des
              Sciences. S\'{e}ries A et B},
    VOLUME = {276},
      YEAR = {1973},
     PAGES = {A225--A228},
      ISSN = {0151-0509},
   MRCLASS = {16A16},
  MRNUMBER = {335564},
MRREVIEWER = {Lindsay N. Childs},
}

@article{Holbrook,
title = {Approximating commuting operators},
journal = {Linear Algebra and its Applications},
volume = {327},
number = {1},
pages = {131-149},
year = {2001},
issn = {0024-3795},
doi = {https://doi.org/10.1016/S0024-3795(00)00286-X},
url = {https://www.sciencedirect.com/science/article/pii/S002437950000286X},
author = {John Holbrook and Matjaž Omladič},
keywords = {-Tuples of commuting matrices, Approximation by generic matrices},
}

@article{Guralnick,
  title={A note on commuting pairs of matrices},
  author={Robert M. Guralnick},
  journal={Linear \& Multilinear Algebra},
  year={1992},
  volume={31},
  pages={71-75}
}

@article{Sivic1,
title = {On varieties of commuting triples},
journal = {Linear Algebra and its Applications},
volume = {428},
number = {8},
pages = {2006-2029},
year = {2008},
issn = {0024-3795},
doi = {https://doi.org/10.1016/j.laa.2007.11.004},
url = {https://www.sciencedirect.com/science/article/pii/S0024379507005228},
author = {Klemen Šivic},
keywords = {Irreducible variety of triples of commuting matrices, Simultaneous approximation by generic matrices},
abstract = {The problem of approximating triples of commuting n×n matrices by triples of generic matrices is equivalent to that whether the variety C(3,n) of triples of commuting matrices is irreducible. It is known that the variety is irreducible for n⩽7 and reducible for n⩾30. Using simultaneous commutative perturbations of pairs of matrices in the centralizer of the third matrix we prove that C(3,8) is irreducible.}
}

@article{Sivic2,
title = {On varieties of commuting triples II},
journal = {Linear Algebra and its Applications},
volume = {437},
number = {2},
pages = {461-489},
year = {2012},
issn = {0024-3795},
doi = {https://doi.org/10.1016/j.laa.2011.08.014},
url = {https://www.sciencedirect.com/science/article/pii/S0024379511005970},
author = {Klemen Šivic},
keywords = {Varieties of triples of commuting matrices, Commuting pairs in the centralizer of a matrix, Approximation by 1-regular matrices},
abstract = {The variety C2(A) of pairs of commuting matrices in the centralizer of n×n matrix A is known to be irreducible if A is 2-regular matrix. We prove that the variety C2(A) is irreducible also for 3-regular matrices A and in some 4-regular cases, but on the other hand, it can be reducible if A is 5-regular matrix.}
}

@article{Sivic3,
title = {On varieties of commuting triples III},
journal = {Linear Algebra and its Applications},
volume = {437},
number = {2},
pages = {393-460},
year = {2012},
issn = {0024-3795},
doi = {https://doi.org/10.1016/j.laa.2011.08.015},
url = {https://www.sciencedirect.com/science/article/pii/S0024379511005982},
author = {Klemen Šivic},
keywords = {Irreducibility of varieties of triples of commuting matrices, Approximation by 1-regular matrices},
abstract = {The problem of irreducibility of the variety C(3,n) of triples of commuting n×n matrices is equivalent to that whether each triple of commuting n×n matrices can be approximated arbitrary well by triples of commuting generic matrices (i.e. matrices having n distinct eigenvalues). It has been proved that the variety C(3,n) is irreducible for n⩽8 and reducible for n⩾30. Using simultaneous commutative approximation of pairs of matrices in the centralizer of the third matrix we prove that the varieties C(3,9) and C(3,10) are also irreducible.}
}

@article{Ro1,
title = {The commuting variety of pgln},
journal = {Journal of Algebra},
volume = {665},
pages = {229-242},
year = {2025},
issn = {0021-8693},
url = {https://www.sciencedirect.com/science/article/pii/S0021869324005945},
author = {Vlad Roman},
}

@article{Ro3,
title = {The commuting variety of Lie algebras in bad characteristic},
author = {Vlad Roman},
journal = {in preparation},
year = {2025}
}

@article{Baranovsky,
title = {The variety of pairs of commuting nilpotent matrices is irreducible},
journal = {Transformation Groups},
volume = {6},
year = {2001},
doi = {https://doi.org/10.1007/BF01236059},
author = {Baranovsky, V.}
}

@book{liebeck-seitz,
  title={Unipotent and Nilpotent Classes in Simple Algebraic Groups and Lie Algebras},
  author={Liebeck, M.W. and Seitz, G.M.},
  isbn={9780821869208},
  lccn={2011043518},
  series={Mathematical surveys and monographs},
  url={https://books.google.ro/books?id=Th-CAwAAQBAJ},
  year={2012},
  publisher={American Mathematical Society}
}

@book{Hartshorne1977,
  author    = {Hartshorne, Robin},
  title     = {Algebraic Geometry},
  series    = {Graduate Texts in Mathematics},
  volume    = {52},
  publisher = {Springer-Verlag},
  year      = {1977},
  isbn      = {978-0-387-90244-9},
  address   = {Berlin, New York},
}

@article{FriedlanderPevtsovaSuslin,
    author = {Friedlander, Eric M. and Pevtsova, Julia and Suslin, Andrei},
    title = {Generic and maximal Jordan types},
    journal = {Inventiones mathematicae},
    year = {2007},
    volume = {168},
    pages = {485-522},
    doi = {https://doi.org/10.1007/s00222-007-0037-2}
}

@article{FriedlanderPevtsova,
author = {Friedlander, Eric and Pevtsova, Julia},
year = {2011},
month = {06},
pages = {},
title = {Generalized support varieties for finite group schemes},
journal = {Doc. Math.}
}

@book{OCV,
  title={Advanced Topics in Linear Algebra: Weaving Matrix Problems Through the Weyr Form},
  author={O'Meara, K. and Clark, J. and Vinsonhaler, C.},
  isbn={9780199793730},
  lccn={2011003565},
  url={https://books.google.com/books?id=lXoFTvPD5MsC},
  year={2011},
  publisher={Oxford University Press, USA}
}
 
\end{document}